\documentclass[11pt]{article}  
\usepackage[english]{babel}
\usepackage[toc,page]{appendix}
\usepackage{bbm}
\usepackage{fullpage}
\usepackage[top=1in, bottom=1in, left=1in, right=1in]{geometry}
\usepackage{etaremune}
\usepackage{enumitem}

\usepackage{amssymb,amsmath,amsthm}

\usepackage{hyperref}

\usepackage[showonlyrefs]{mathtools}

\DeclareSymbolFont{rsfs}{U}{rsfs}{m}{n}
\DeclareSymbolFontAlphabet{\mathscrsfs}{rsfs}
\usepackage{mathrsfs}

\usepackage{url}

\hypersetup{
  colorlinks   = true, 
  urlcolor     = blue, 
  linkcolor    = blue, 
  citecolor   = red 
}

\usepackage[labelformat=empty]{caption}

\newtheorem{theorem}{Theorem}[section]
\newtheorem{lemma}[theorem]{Lemma}

\newtheorem{proposition}[theorem]{Proposition}

\theoremstyle{definition}
\newtheorem{definition}{Definition}
\newtheorem{remark}[theorem]{Remark}

\numberwithin{equation}{section}

\usepackage{times}

\def\reals{{\mathbb R}}
\def\R{{\mathbb R}}
\def\Z{{\mathbb Z}}
\newcommand{\bea}{\begin{eqnarray}}
\newcommand{\eea}{\end{eqnarray}}
\newcommand{\<}{\langle}
\renewcommand{\>}{\rangle}
\def\bsigma{{\boldsymbol{\sigma}}}
\def\bbE{{\mathbb{E}}}

\def\pl{\mbox{\rm\tiny pl}}
\def\plsmall{\mbox{\rm\small pl}}
\def\rd{\mbox{\rm\tiny rd}}
\def\cP{{\mathcal{P}}}
\def\Par{{\sf P}}
\def\RS{\mbox{\tiny\rm RS}}
\def\de{{\rm d}}
\def\la{\langle}
\def\ra{\rangle}

\def\Is{\mathrm{Is}}

\newcommand{\Sph}{\mathbb{S}}
\newcommand{\ubq}{\bar{q}}
\newcommand{\lbq}{\underline{q}}
\def\bsig{{\boldsymbol {\sigma}}}
\def\bG{{\boldsymbol G}}
\def\P{\mathbb{P}}
\DeclareMathOperator*{\E}{\bbE}
\def\supp{{\rm supp}}
\def\eps{\varepsilon}
\def\one{{\mathbf{1}}}
\def\cL{{\cal L}}
\def\cont{{\mathsf{cont}}}
\def\dis{{\mathsf{dis}}}
\DeclareMathOperator*{\argmax}{\arg\max}

\author{
Antonio Auffinger \thanks{Northwestern University, Department of Mathematics. Evanston, IL, USA. Email: \texttt{auffing@math.northwestern.edu}}
\and
Ahmed El Alaoui \thanks{Cornell University, Department of Statistics and Data Science. Ithaca, NY, USA. Email: \texttt{elalaoui@cornell.edu}}
\and
Mark Sellke\thanks{Harvard University, Department of Statistics. Cambridge, MA, USA. Email: \texttt{msellke@fas.harvard.edu}}
}

\title{On the Discontinuous Breaking of Replica Symmetry and Shattering in Mean-Field Spin Glasses}

\date{}

\begin{document}

\maketitle

\begin{abstract}
\noindent
We show that in mean-field spin glasses, a discontinuous breaking of replica symmetry at the critical inverse temperature $\beta_c$ implies the existence of an intermediate shattered phase.  
This confirms a prediction from physics regarding the nature of random first order phase transitions. On the other hand, we give an example of a spherical spin glass which exhibits shattering, yet the transition is continuous at $\beta_c$.  
\end{abstract}

\section{Introduction and main result}

Mean-field spin glasses are understood to exhibit multiple phase transitions as the temperature is lowered, including the famous replica symmetry breaking phenomenon.
The onset of replica symmetry breaking may take one of two forms: the order parameter becomes non-trivial either \emph{continuously} or \emph{discontinuously}.
The latter is often referred to as a \emph{random first-order phase transition}, and has long been believed by physicists to be associated with the existence of an intermediate \emph{dynamical} or \emph{shattering} transition (see the discussion in e.g. \cite{mezard1996tentative,franz2013note,kirkpatrick2014universal}).
We consider both Ising and spherical glasses and prove that a discontinuous phase transition implies the existence of a shattered phase, but that the converse does not hold in spherical spin glasses.

 Let $(g_{i_1,\cdots,i_k})_{1 \le i_1,\cdots,i_k \le N, k \ge 2}$ be a sequence of i.i.d.\ $N(0,1)$ random variables and consider the random Hamiltonian 
\begin{align}\label{eq:hamiltonian}
H_N(\bsigma) =  \sum_{k=2}^{\infty} \frac{\gamma_k}{N^{(k-1)/2}}  \sum_{1\le i_1 ,\cdots , i_k\le N} g_{i_1,\cdots,i_k}\sigma_{i_1}\cdots\sigma_{i_k} \,,
\end{align}
defined for $\bsigma \in \Sigma_N$ being either the Euclidean sphere on radius $\sqrt{N}$: $\Sigma_N = \Sph^{N-1}(\sqrt{N}) =\{\bsigma \in \reals^N : \|\bsigma\| \le \sqrt{N}\}$, or the binary hypercube $\Sigma_N = \{-1,+1\}^N$.
We let 
\[\xi(x) := \sum_{k=2}^{\infty} \gamma_k^2 x^k\]  
be the mixture function of $H_N$ and we assume that $\xi(1)=1$ and $\xi(1+\alpha) <\infty$ for some $\alpha>0$ so that $H_N$ is almost surely well defined.  

 We define the Gibbs measure at inverse temperature $\beta$:
\begin{align}\label{eq:Gibbs}
\de \mu_{\beta}(\bsigma) \propto e^{\beta H_N(\bsigma)} \de \mu_{0}(\bsigma)\,,
\end{align}
where $\mu_0$ is the uniform measure on $\Sigma_N$ (spherical or binary).

Define the limiting free energy of the model as
 \begin{align}
F(\beta) &= \lim_{N\to \infty} \frac{1}{N} \E \log \int e^{\beta H_N(\bsigma)} \de\mu_0( \bsigma)\,,
\end{align}
where it is known that the limit exists and is given by a Parisi formula for all $\beta \ge 0$ \cite{parisi1979infinite,talagrand2006parisi,talagrand2006spherical,panchenko2013parisi,chen2013aizenman}.
Let $\beta_c$ be the replica symmetry breaking inverse temperature defined as 
\begin{align}\label{eq:betac}
\beta_c = \sup\{\beta \ge 0 : F(\beta) = \beta^2/2\}\,.
\end{align}
It is known that $\beta_c<\infty$ and that $F(\beta)=\beta^2/2$ for all $0\leq\beta\leq\beta_c$.

We are interested in the nature of the phase transition at $\beta_c$ and its impact on the structure of the Gibbs measure $\mu_{\beta}$ at higher temperatures $\beta < \beta_c$. It is known that the two-replica overlap distribution is a delta mass at $0$ for all $\beta \le \beta_c$. Informally, we say that the phase transition at $\beta_c$ happens \emph{continuously} if the point mass at $0$ splits into continuously moving pieces as the inverse temperature $\beta$ is increased past $\beta_c$, i.e., the support of the measure is continuous in $\beta$ (as a compact subset of $[0,1]$ metrized by the Hausdorff distance). Conversely, we say that the phase transition happens \emph{discontinuously} if a piece of the support emerges far away from 0 as soon as $\beta>\beta_c$.  
More formally, we let $\cP([0,1])$ denote the set of probability measures on the interval $[0,1]$, and for $\zeta \in \cP([0,1])$ we define
\begin{align}
q_{\max}(\zeta)
=
\sup\{q\in \supp(\zeta)\}\,,
\end{align}
to be the right-end point of the support of $\zeta$.
Let $\zeta_{\beta}=\zeta_{\beta}(\xi)$ be the Parisi measure of the model, i.e. the minimizer of the Parisi formula as recalled in Section~\ref{sec:preliminaries}.

\begin{definition}
\label{def:discontinuous}
    We say $\xi$ admits a \textbf{continuous} RSB transition if 
    \[
    \lim_{\beta\downarrow \beta_c}
    q_{\max}(\zeta_{\beta})
    =
    0\, .
    \]
    We say $\xi$ admits a \textbf{discontinuous} RSB transition if
    \[
    \lim_{\beta\downarrow \beta_c}
    q_{\max}(\zeta_{\beta})
    >0\, .
    \]
\end{definition}

As our main result we show that a discontinuous transition implies the existence of a \emph{shattered} phase immediately below $\beta_c$. 
For $q \in [0,1]$ and $\beta\leq \beta_c$ we define the Franz-Parisi potential
\begin{equation}
\label{eq:FP-def}
F_{\beta}(q) := 
\lim_{\eps \downarrow 0} \lim_{N \to \infty} \frac{1}{N} 
\E \log \int \one_{| \langle \bsigma, \bsigma_0\rangle/N - q| \le \eps} \, e^{\beta H_N(\bsigma)} \de\mu_0( \bsigma)\, ,
\end{equation}
where the expectation $\bbE[\,\cdot\,]$ 
is taken with respect to the joint distribution of the disorder, i.e., the random variables $(g_{i_1,\cdots,i_k})$, and $\bsigma_0 \sim \mu_{\beta}$. 
In the spherical case, the above limit was shown to exist and computed in \cite{alaoui2023shattering} for all $\beta\leq \beta_c$; see also \cite{dembo2024disordered} for low-temperature generic models with finite replica symmetry breaking, where the proof tools make use of spherical symmetry. Although the exact formula will not be needed here, we also compute the FP potential~\eqref{eq:FP-def} in the Ising case, for $\beta<\beta_c$, using a more general convex duality argument in Section~\ref{sec:FP}.
We are now in position to formally define the notion of shattering. 
\begin{definition}\label{def:shattering}
We say that $\xi$ exhibits \textbf{shattering} at inverse temperature $\beta$ if
\begin{enumerate}
\item $F_{\beta}(0) = F(\beta)$.
\item $F_{\beta}(q) < F(\beta)$ for all $q \in (0,1)$.
\item There exists an interval $ [q_1 , q_2] \subset (0,1)$ on which $F_{\beta}$ is strictly increasing. 
\end{enumerate}
\end{definition}

In the spherical case the first two conditions are satisfied for all $\beta < \beta_c$ and the third condition was shown to hold for the pure $p$-spin model for all $\beta \in (C,\beta_c)$ where $C$ is an absolute constant for $p$ large~\cite{alaoui2023shattering} while $\beta_c(p)=(1+o_p(1))\cdot\sqrt{\log p}$. This latter condition heuristically signals a ``clumping" of the mass of the Gibbs measure $\mu_{\beta}$ around its typical points. It was shown in the same paper that under the additional technical assumption that $q_1$ is close enough to 1, one can construct a \emph{shattering decomposition} for $\mu_{\beta}$: a collection of well separated connected subsets of the sphere, each with exponentially small Gibbs mass, which together carry all but an exponentially small fraction of the Gibbs mass. See also~\cite{alaoui2024near} for a different approach to shattering yielding an improved lower bound on $\beta$.   
We show in Lemma~\ref{lem:FPglobal} that the first two conditions also hold in the Ising case for all $\beta<\beta_c$. The third condition on the other hand will be implied by a discontinuous breaking of replica symmetry.         

In addition to $\beta_c$ defined in Eq.~\eqref{eq:betac} we define the  inverse temperature
\begin{align}
\beta_{\cont} &:= \xi''(0)^{-1/2}\,. \label{eq:beta_cont}
\end{align}

One interpretation of $\beta_{\cont}$ is that it is the largest inverse temperature such that $q=0$ is a local maximum of the FP potential (see Lemma~\ref{lem:FPbound}). 
We now state our main result:
\begin{theorem}
\label{thm:main}
Suppose $\xi$ admits a discontinuous RSB transition and $\beta_c < \beta_\cont$. Then there exists $\delta>0$ such that $\xi$ exhibits shattering for all $\beta \in [\beta_c - \delta,\beta_c)$. 
\end{theorem}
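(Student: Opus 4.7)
The plan is to combine the strict local maximum of $F_\beta$ at $q=0$, granted by $\beta_c < \beta_\cont$, with a near-degeneracy of the Franz--Parisi potential at a positive overlap $q^\ast$, which I will extract from the discontinuous RSB hypothesis. Together these produce the up-then-up shape of $F_\beta$ that is required for condition (3) of Definition~\ref{def:shattering}.

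First, by the discontinuous RSB assumption, $q^\ast := \liminf_{\beta \downarrow \beta_c} q_{\max}(\zeta_\beta) > 0$. Along a suitable subsequence $\beta_n \downarrow \beta_c$, the Parisi measures $\zeta_{\beta_n}$ converge weakly to some $\zeta^\ast \in \cP([0,1])$ with $q^\ast \in \supp(\zeta^\ast)$; by continuity of the Parisi functional in $(\zeta,\beta)$ and optimality along the sequence, $\zeta^\ast$ is a minimizer of the Parisi functional at $\beta_c$, distinct from $\delta_0$. The central quantitative input is the claim
\[
\lim_{\beta \uparrow \beta_c}\bigl(F(\beta) - F_\beta(q^\ast)\bigr) \;=\; 0.
\]
To see this, note that for $\beta$ slightly \emph{above} $\beta_c$, $q^\ast \in \supp(\zeta_\beta)$, so two independent draws from $\mu_\beta$ have overlap near $q^\ast$ with probability that is not exponentially small. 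This forces the restricted partition function in~\eqref{eq:FP-def} to have the same exponential rate as the unrestricted one, i.e.\ $F_\beta(q^\ast) = F(\beta)$. I then transfer this identity to $\beta$ just below $\beta_c$ by continuity of $\beta \mapsto F_\beta(q^\ast)$: in the spherical case this is manifest from the closed-form expression of~\cite{alaoui2023shattering}, while in the Ising case it follows from the convex-duality formula developed in Section~\ref{sec:FP} (or alternatively from a Gaussian interpolation argument in $\beta$ applied to the restricted partition function).

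Next, since $\beta_c < \beta_\cont$, Lemma~\ref{lem:FPbound} gives a strict local maximum of $F_\beta$ at $q=0$ that is quantitatively uniform for $\beta$ near $\beta_c$: fixing any small $\eta \in (0, q^\ast)$, there is a constant $c>0$ such that $F_\beta(\eta) \le F(\beta) - c$ for all $\beta$ in a neighborhood of $\beta_c$. Combining this with the previous paragraph, there is $\delta>0$ such that for every $\beta \in [\beta_c-\delta,\beta_c)$,
\[
F_\beta(q^\ast) \;\ge\; F(\beta) - c/2 \;>\; F(\beta) - c \;\ge\; F_\beta(\eta).
\]
Conditions (1) and (2) of Definition~\ref{def:shattering} hold for all $\beta<\beta_c$ by Lemma~\ref{lem:FPglobal}. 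For condition (3), the strict inequality $F_\beta(\eta) < F_\beta(q^\ast)$ with $\eta < q^\ast$, together with continuity and the real-analyticity of $F_\beta$ on $(0,1)$ inherited from its variational representation, produces a subinterval $[q_1,q_2] \subset (\eta, q^\ast)$ on which $F_\beta$ is strictly increasing.

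The main obstacle is the content of the second paragraph: identifying the support of a Parisi minimizer with the level set $\{q : F_\beta(q) = F(\beta)\}$ and then controlling continuity of $\beta \mapsto F_\beta(q^\ast)$ across $\beta_c$, precisely where the Parisi functional has multiple minimizers. In the spherical setting this is within reach of the explicit formula from~\cite{alaoui2023shattering}; in the Ising setting it relies essentially on the convex-duality formula developed in Section~\ref{sec:FP}, and the continuity across $\beta_c$ needs an interpolation or Lipschitz-in-$\beta$ argument that does not see the jump in $\zeta_\beta$. A secondary technical point is upgrading the pointwise inequality $F_\beta(\eta) < F_\beta(q^\ast)$ to strict monotonic increase on a subinterval, which uses smoothness of $F_\beta$ on $(0,1)$.
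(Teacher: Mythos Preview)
Your overall strategy matches the paper's: use Lemma~\ref{lem:FPbound} for a quantitative dip of $F_\beta$ near $q=0$, use Lemma~\ref{lem:FPglobal} for items (1)--(2) of Definition~\ref{def:shattering}, and show $F_\beta(\bar q)$ is close to $F(\beta)$ at some $\bar q>0$ for $\beta$ just below $\beta_c$. The divergence is entirely in how you obtain this last fact, and that is where your argument has genuine gaps.

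First, the claim that the weak limit $\zeta^\ast$ of $\zeta_{\beta_n}$ along $\beta_n\downarrow\beta_c$ is ``a minimizer of the Parisi functional at $\beta_c$, distinct from $\delta_0$'' contradicts Proposition~\ref{prop:parisi-formula}, which asserts uniqueness of the minimizer. What actually happens is $\zeta_{\beta_n}\to\delta_0$ weakly while the mass near $q_{\max}(\zeta_{\beta_n})$ vanishes; there is no reason for $q^\ast$ to lie in $\supp(\zeta^\ast)$. Second, and more seriously, your deduction that ``$F_\beta(q^\ast)=F(\beta)$ for $\beta$ slightly above $\beta_c$'' from the overlap distribution is not justified. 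Knowing $\bbE\,\mu_\beta^{\otimes 2}\big(|\la\bsig_1,\bsig_2\ra/N-q^\ast|\le\eps\big)$ is not exponentially small only tells you the ratio $Z_N(\bsig_0,q^\ast,\eps)/Z_N$ is not exponentially small \emph{with non-negligible probability}; to convert this into a statement about $\frac{1}{N}\bbE\log Z_N(\bsig_0,q^\ast,\eps)$ you need concentration of $\log Z_N(\bsig_0,q^\ast,\eps)$ when $\bsig_0\sim\mu_\beta$ depends on the disorder. For $\beta>\beta_c$ the planted--random contiguity of Lemma~\ref{lem:contig} fails, so you cannot replace $\bsig_0\sim\mu_\beta$ by $\bsig_0\sim\mu_0$ and invoke Gaussian concentration as in~\eqref{eq:concentr}. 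Relatedly, the paper only establishes existence and the formula for $F_\beta(q)$ for $\beta\le\beta_c$, so invoking $F_\beta(q^\ast)$ above $\beta_c$ and its continuity across $\beta_c$ is outside what is available.

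The paper's route through Lemma~\ref{lem:discont_FP} avoids all of this by working \emph{at} $\beta=\beta_c$, where contiguity still holds. The contrapositive argument there is: if $F_{\beta_c}(q)<F(\beta_c)$, then under $\nu_{\pl}$ the Nishimori identity gives $\bbE_{\plsmall}\mu_{\beta_c}^{\otimes 2}(|\la\bsig_1,\bsig_2\ra/N-q|\le\eps)=\bbE_{\plsmall}[Z_N(\bsig_0,q,\eps)/Z_N]$ with $\bsig_0\sim\mu_0$, so both numerator and denominator concentrate and the ratio is $e^{-cN}$; contiguity transfers this to $\nu_{\rd}$, and a generic approximation then excludes $q$ from $\supp(\zeta_{\beta_c})$. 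This yields $F_{\beta_c}(\bar q)=F(\beta_c)$ directly, after which continuity of $\beta\mapsto F_\beta(\bar q)$ \emph{from below} (where the planted description and the formulas of Section~\ref{sec:FP} are valid) finishes the job. Your self-identified ``main obstacle'' is exactly the point where you should stay on the $\beta\le\beta_c$ side and use contiguity rather than attempt to cross $\beta_c$ from above.
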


The additional condition $\beta_c < \beta_\cont$ in the above statement is a genericity condition on the mixture $\xi$: one always has $\beta_c \le \beta_\cont$ (see Proposition~\ref{prop:beta-cont-bound} ), and in the spherical case we show in Lemma~\ref{lem:beta_order} that the inequality is strict under small perturbations of $\xi$ whenever the RSB transition is discontinuous.

We also mention that in the spherical case, shattering at inverse temperature $\beta$ is conjectured in the physics literature~\cite{crisanti1993spherical} to hold exactly when there exists $q \in (0,1)$ such that 
\[
\beta^2\xi'(q)(1-q) > q\,.
\]
We expect this condition to be equivalent to Definition~\ref{def:shattering}, but this is currently not known (see \cite[Corollary 2.6]{dembo2025dynamics} for important recent progress from the viewpoint of stationary Langevin dynamics). For completeness we also show in Section~\ref{sec:spherical} that this alternative criterion is implied by a discontinuous RSB as per Definition~\ref{def:discontinuous}. On the other hand we provide an example of a mixture $\xi$ which admits a continuous RSB transition yet exhibits shattering, showing that the implication of Theorem~\ref{thm:main} cannot be reversed in general.

\begin{remark}
\label{rem:shattering}
As a further remark, shattering as in Definition~\ref{def:shattering} implies exponentially slow mixing of natural dynamics initialized from equilibrium. This is simply because for a typical point $\bsig_0\sim\mu_{\beta}$ the set $\{\bsig:\la\bsig,\bsig_0\ra/N\geq q\}$ yields a bottleneck for any $q \in (q_1,q_2)$. 
(We note that \cite[Corollary 2.8]{alaoui2023shattering} gives a similar deduction of slow mixing, based on a more refined shattering decomposition that we do not focus on here.)
We sketch a crude argument: let $q_1< \lbq < \ubq < q_2$ and $\alpha:[-1,1]\to [0,1]$ be a Lipschitz increasing function with $\alpha(\lbq)=0$ and $\alpha(\ubq)=1$.
We argue that the test function 
\[
f(\bsig)
=
\alpha(\la\bsig,\bsig_0\ra/N)
\]
witnesses an exponentially small spectral gap for the dynamics: 
\begin{equation}
\label{eq:rayleigh-quotient}
\la f,(-\cL_{\beta}) f\ra\leq e^{-cN} \|f-\bbE_{\mu_{\beta}}[f]\|_{L^2(\mu_\beta)}^2
\end{equation}
for some $c>0$ with probability tending to $1$ as $N \to \infty$, where $\cL_{\beta}$ is the infinitesimal generator for either Langevin or Glauber dynamics. We refer to~\cite{gheissari2019spectral} and~\cite{eldan2021spectral} for a definition of $\cL_{\beta}$ in the spherical and Ising settings respectively. 
Indeed let us define $\bar{F}_{\beta}(q) = F_{\beta}(q) - F(\beta)$. 
In the argument to follow, all displayed inequalities hold on events of high probability.  
We have by the second item  of Definition~\ref{def:shattering} that 
\begin{equation*}
\bbE_{\mu_{\beta}}[f] \leq \exp\big(N \sup_{q\in [\lbq,1]} \bar{F}_{\beta}(q) +o(N)\big)\,,~~~\mbox{and}~~~
\bbE_{\mu_{\beta}}[f^2] \ge \exp\big(N \sup_{q\in [\ubq,1]} \bar{F}_{\beta}(q) -o(N)\big)\,.
\end{equation*}
To obtain the above we used the bounds $f(\bsig) \le \one\{\la\bsig,\bsig_0\ra/N \ge \lbq\}$ and $f(\bsig)^2\ge \one\{\la\bsig,\bsig_0\ra/N \ge \ubq\}$. 
Since $\bar{F}_{\beta}$ is strictly increasing on $(q_1,q_2)$, the above two suprema are equal. Moreover, since $\bar{F}_{\beta}(q)<0$ for $q\neq 0$, we have  
\[ \|f-\bbE_{\mu_{\beta}}[f]\|_{L^2(\mu_\beta)}^2 \ge \exp\big(N \sup_{q\in [\ubq,1]} F_{\beta}(q) - o(N)\big)\geq \exp(N F_{\beta}(q_2)- o(N))\,.\]

As for the left-hand side of \eqref{eq:rayleigh-quotient}, since $\alpha$ is constant on $[0,\lbq]$ and $[\ubq,1]$, 
\[\la f,(-\cL_{\beta}) f\ra \le \exp\big(N \sup_{q\in [\lbq,\ubq]}
\bar{F}_{\beta}(q) +o(N)\big) = \exp\big(N
\bar{F}_{\beta}(\ubq) +o(N)\big)\,.\]
The conclusion~\eqref{eq:rayleigh-quotient} follows since $\bar{F}_{\beta}(\ubq) < \bar{F}_{\beta}(q_2)$: the spectral gap of Langevin or Glauber dynamics is exponential in $N$ whenever $\xi$ exhibits shattering.
\end{remark}

\section{Technical Preliminaries}
\label{sec:preliminaries}

\paragraph{The Parisi formula}
We recall the Parisi functional.
In the spherical case, it is the strictly convex lower semicontinuous (in the weak$^*$ topology) functional 
$\Par:\cP([0,1))\to \reals$
on the space of probability measures $\zeta$ with support contained inside $[0,1]$:
\begin{align}
\label{eq:CrisantiSommers}
\Par^{\Sph}_{\beta}(\zeta;\xi) 
&=
\frac{1}{2}\left(\int_{0}^1 \beta^2(\xi'(t)+h^2)
\zeta([0,t])
\de t 
+
\int_0^{\hat{q}} \frac{\de t}{\phi_{\zeta}(t)}  +\log(1-\hat{q})\right)\, ,
\\
\nonumber
\phi_{\zeta}(t) & = \int_{t}^1\zeta([0,s])\de s\, ,~~~ \hat{q} = \inf\{s\in [0,1) \, :\, \zeta([0,s])=1\}{\,<1}\, .
\end{align}
In the Ising case, for $\zeta\in\cP([0,1])$ one defines $\Phi:[0,1]\times\reals\to\reals$ to solve the Parisi PDE
\begin{equation}
\label{eq:Parisi-PDE}
\begin{aligned}
\partial_t \Phi_{\zeta}(t,x)
&=
-\frac{\beta^2 \xi''(t)}{2}\Big(
\partial_{xx}\Phi_{\zeta}(t,x)+\zeta([0,s])(\partial_x \Phi_{\zeta}(t,x))^2
\Big),
\\
\Phi_{\zeta}(1,x)
&=
\log\cosh(x).
\end{aligned}
\end{equation}
Then the Ising Parisi functional is
\[
\Par^{\Is}_{\beta}(\zeta;\xi,h) 
=
\Phi_{\zeta}(0,h)-\frac{\beta^2}{2}\int_0^1 \zeta([0,t])t\xi''(t)\de t.
\]

\begin{proposition}[\cite{talagrand2006parisi-b,auffinger2015parisi}]
\label{prop:parisi-formula}
In both the Ising and spherical settings, one has 
\[
F(\beta;\xi,h)
=
\lim_{N\to\infty}
\bbE
\log 
\int 
e^{\beta H_N(\bsigma)+h\la \one,\sigma\ra}
\de\mu_0(\sigma) = 
\inf_{\zeta\in\cP([0,1])}
\Par_{\beta}(\zeta;\xi).
\]
Moreover $\Par_{\beta}(\cdot;\xi)$ is strictly convex and the minimum is uniquely attained at some $\zeta_{\beta}(\xi)\in\cP([0,1])$.
Finally $\beta\in [0,\beta_c]$ if and only if $\zeta_{\beta}=\delta_0$, the atom at $0$.
\end{proposition}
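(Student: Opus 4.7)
The proposition bundles three well-established ingredients, and my plan is to cite each at the appropriate level while carrying out the elementary parts explicitly.

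\textbf{Step 1: The Parisi variational formula.} I would establish $F(\beta;\xi,h)=\inf_\zeta \Par_\beta(\zeta;\xi)$ as a two-sided bound. The upper bound $F\le\Par_\beta(\zeta;\xi)$ for every $\zeta$ follows from Guerra's RSB interpolation: for discrete $\zeta$ with finitely many atoms, interpolate between $H_N$ and a nested sequence of independent Gaussian processes stratified by those atoms, differentiate the interpolating free energy in the interpolation parameter, and use Gaussian integration by parts together with Jensen's inequality to control the sign; weak-$*$ continuity of $\zeta\mapsto\Par_\beta(\zeta;\xi)$ then extends the bound to arbitrary $\zeta\in\cP([0,1])$ via discrete approximation. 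The matching lower bound is Panchenko's theorem in the Ising setting and Chen's extension in the spherical setting: the Aizenman--Sims--Starr cavity computation writes $F(\beta;\xi,h)$ as an infimum over asymptotic overlap structures, the Ghirlanda--Guerra identities constrain these structures, and Panchenko's ultrametricity theorem forces them to coincide in law with a Ruelle probability cascade parameterized by some $\zeta$, whose cavity value is exactly $\Par_\beta(\zeta;\xi)$. This lower bound, and in particular ultrametricity, is the genuinely hard ingredient; I would quote it directly.

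\textbf{Step 2: Strict convexity and uniqueness.} In the Ising case, strict convexity of $\Par_\beta(\cdot;\xi)$ is the theorem of Auffinger--Chen, whose outline I would follow: analyze the second variation of $\zeta\mapsto \Par_\beta(\zeta;\xi)$ along a line segment $(1-\lambda)\zeta_0+\lambda\zeta_1$ of CDFs using the stochastic control representation of $\Phi_\zeta(0,h)$, and exploit $\xi''>0$ on $(0,1)$ to rule out degeneracy unless $\zeta_0=\zeta_1$. In the spherical Crisanti--Sommers form, strict convexity is more transparent: $\zeta\mapsto\phi_\zeta$ is affine, the functional $\phi\mapsto\int_0^{\hat q}\de t/\phi(t)$ is strictly convex in $\phi$, and the remaining contributions $\int_0^1 \beta^2(\xi'+h^2)\zeta([0,t])\,\de t+\log(1-\hat q)$ are affine in $\zeta$. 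Weak-$*$ compactness of $\cP([0,1])$ combined with weak-$*$ lower semicontinuity of $\Par_\beta(\cdot;\xi)$ yields existence of a minimizer, and strict convexity yields uniqueness.

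\textbf{Step 3: Characterization $\beta\in[0,\beta_c]\Leftrightarrow \zeta_\beta=\delta_0$.} This reduces to the direct evaluation $\Par_\beta(\delta_0;\xi)=\beta^2/2$ in both settings (with $h=0$, the regime relevant to the definition of $\beta_c$), which I would carry out by explicit calculation. In the spherical case one has $\hat q=0$ and $\zeta([0,t])=1$ for $t\ge 0$, so \eqref{eq:CrisantiSommers} collapses to $\tfrac{1}{2}\int_0^1\beta^2\xi'(t)\,\de t=\tfrac{\beta^2}{2}\xi(1)=\beta^2/2$ using $\xi(1)=1$. In the Ising case, the Cole--Hopf substitution $\Psi=e^{\Phi}$ turns \eqref{eq:Parisi-PDE} into a backward heat equation with time-dependent diffusion coefficient, yielding $\Phi_{\delta_0}(0,0)=\tfrac{\beta^2}{2}\xi'(1)$; integration by parts then gives $\int_0^1 t\xi''(t)\,\de t=\xi'(1)-1$, and combining the two produces $\Par^{\Is}_\beta(\delta_0;\xi,0)=\beta^2/2$. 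The equivalence now follows from the uniqueness established in Step 2: if $\zeta_\beta=\delta_0$ then $F(\beta)=\beta^2/2$, so $\beta\le\beta_c$; conversely if $\beta\le\beta_c$ then $F(\beta)=\beta^2/2=\Par_\beta(\delta_0;\xi)$, so $\delta_0$ is a minimizer, and by uniqueness $\zeta_\beta=\delta_0$. The only nontrivial remaining step is Step 1's lower bound; Steps 2 and 3 are essentially bookkeeping once strict convexity is quoted.
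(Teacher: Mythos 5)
The paper does not prove this proposition; it is quoted from Talagrand and Auffinger--Chen, so there is no in-paper argument to compare against. Your outline is a correct and standard account of how those cited results combine: Guerra's interpolation plus the Aizenman--Sims--Starr/ultrametricity lower bound for the variational formula, Auffinger--Chen (Ising) and convexity of $x\mapsto 1/x$ (spherical) for strict convexity and uniqueness, and your explicit evaluations $\Par_\beta(\delta_0)=\beta^2/2$ in both settings check out (including the Cole--Hopf computation and the integration by parts $\int_0^1 t\xi''(t)\,\de t=\xi'(1)-1$). One minor imprecision: in the Crisanti--Sommers functional the term $\log(1-\hat q)$ is not affine in $\zeta$ since $\hat q$ depends on $\zeta$; the standard fix is to note that $\int_0^{\hat q}\de t/\phi_\zeta(t)+\log(1-\hat q)=\int_0^{c}\de t/\phi_\zeta(t)+\log(1-c)$ for any $c\in[\hat q,1)$ because $\phi_\zeta(t)=1-t$ there, after which your affine-plus-strictly-convex decomposition goes through with a common $c$ for the two measures being compared.
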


The Parisi formula can be differentiated explicitly; see e.g. \cite{talagrand2006parisi,panchenko2008differentiability}:

\begin{proposition}
\label{prop:free-energy-deriv}
The derivative of $F(\beta;\xi)$ with respect to any $\gamma_p$ exists and is given by
\[
\frac{\partial F(\beta;\xi)}{\partial\gamma_p}
=
\beta^2 \gamma_p 
\Big(1-\int q^p  \de \zeta_{\beta}(q)\Big)\,,
\]
and moreover, for each $p$ such that $\gamma_p \neq 0$ we have
\[ \lim_{N \to \infty} \E \mu_{\beta}^{\otimes 2} \big( (\langle \bsigma_1 , \bsigma_2 \rangle/N)^p \big) = \int q^p  \de \zeta_{\beta}(q)\,.\]
Furthermore, $\zeta_{\beta}$ is continuous in $\xi$.
\end{proposition}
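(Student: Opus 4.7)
My plan is to (i) compute $\partial_{\gamma_p}F_N$ at finite $N$ by Gaussian integration by parts, (ii) compute $\partial_{\gamma_p} F(\beta;\xi)$ in the limit by differentiating the Parisi variational formula through the envelope theorem, (iii) use convexity of $F_N$ in $\gamma_p$ to pass derivatives to the limit and identify the overlap moment with the Parisi measure moment, and finally (iv) deduce continuity of $\zeta_\beta$ in $\xi$ from strict convexity and weak-$*$ compactness.

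For step (i), set $W_{\vec i}(\bsig) := \sigma_{i_1}\cdots\sigma_{i_p}/N^{(p-1)/2}$, so $\partial_{\gamma_p} H_N = \sum_{\vec i} g_{\vec i} W_{\vec i}$. Stein's lemma gives $\bbE[g_{\vec i}\langle W_{\vec i}\rangle_\beta] = \beta\gamma_p\,\bbE[\langle W_{\vec i}^2\rangle_\beta - \langle W_{\vec i}\rangle_\beta^2]$; summing over $\vec i$ and using $\sum_{\vec i} W_{\vec i}(\bsig^1) W_{\vec i}(\bsig^2) = N R_{12}^p$ (valid on the sphere and hypercube, where $\|\bsig\|^2=N$) yields $\partial_{\gamma_p} F_N = \beta^2\gamma_p(1 - \bbE\langle R_{12}^p\rangle_\beta)$.

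For step (ii), Proposition~\ref{prop:parisi-formula} gives a variational formula with unique minimizer $\zeta_\beta$, so the envelope theorem applies: $\partial_{\gamma_p}F(\beta;\xi)$ equals the partial derivative of $\Par_\beta(\zeta;\xi)$ in $\gamma_p$ with $\zeta$ held fixed at $\zeta_\beta$. In the spherical case \eqref{eq:CrisantiSommers}, only the combination $\xi'(t) = \sum_k k\gamma_k^2 t^{k-1}$ depends on $\gamma_p$, and a single integration by parts converts $\int_0^1 t^{p-1}\zeta_\beta([0,t])\,dt$ to $\tfrac{1}{p}\!\left(1-\int q^p\,d\zeta_\beta(q)\right)$, reproducing the claim. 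The Ising case proceeds by differentiating the Parisi PDE \eqref{eq:Parisi-PDE} in $\gamma_p$ (which only enters through the coefficient $\xi''(t)$) and using Ito-type identities on $\Phi_{\zeta_\beta}$ to recombine the PDE-induced contribution with the explicit subtraction $\tfrac{\beta^2}{2}\int_0^1 \zeta([0,t])t\xi''(t)\,dt$. For step (iii), $\gamma_p \mapsto F_N(\beta;\xi)$ is convex because its second derivative is a nonnegative Gibbs variance; pointwise convergence of convex functions implies convergence of derivatives at every point of differentiability of the limit, and (ii) gives this differentiability. Matching (i) with (ii), when $\gamma_p \ne 0$ we divide to identify $\lim_{N\to\infty} \bbE\langle R_{12}^p\rangle_\beta = \int q^p\,d\zeta_\beta(q)$.

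Finally for step (iv), $\cP([0,1])$ is weak-$*$ compact and $\Par_\beta(\zeta;\xi)$ is jointly continuous in $(\zeta,\xi)$ under the natural topology on mixtures; for $\xi_n \to \xi$, any weak-$*$ limit point of $\zeta_\beta(\xi_n)$ must minimize $\Par_\beta(\cdot;\xi)$ and hence equals $\zeta_\beta(\xi)$ by strict convexity, so the whole sequence converges. The main obstacle is step (ii) in the Ising setting, where $\xi$ enters through the Parisi PDE rather than in closed form; one either executes the differentiation directly using Ito calculus on $\Phi_\zeta$, or bypasses it by using (i) and (iii) together with the Ghirlanda--Guerra identities to identify the overlap moment limit, which is the cleaner route taken in the cited references.
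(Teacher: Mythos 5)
The paper offers no proof of this proposition; it simply cites \cite{talagrand2006parisi,panchenko2008differentiability}, and your sketch is essentially a reconstruction of the standard argument from those references, with the main steps (Gaussian integration by parts at finite $N$, differentiation of the Parisi functional at the unique minimizer, convexity in $\gamma_p$ to pass to the limit, and the direct method for continuity of $\zeta_\beta$) all correctly identified. Two points deserve slightly more care than you give them. First, the envelope/Danskin step by itself only yields one-sided directional derivatives of the infimum ($\min$ and $\max$ of $\partial_{\gamma_p}\Par$ over the set of minimizers); to get genuine differentiability of $F$ in $\gamma_p$ you must combine this with the convexity from your step (iii) and the uniqueness of the minimizer from Proposition~\ref{prop:parisi-formula} --- you have all the ingredients but do not quite assemble them, and this assembly is precisely the content of \cite{panchenko2008differentiability}. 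Second, in step (iv) the spherical functional $\Par^{\Sph}$ is only weak-$*$ \emph{lower semicontinuous} in $\zeta$ (the term $\int_0^{\hat q}\de t/\phi_\zeta(t)$ can jump up along a weakly convergent sequence), not jointly continuous as you assert; the compactness argument still goes through because lower semicontinuity in $\zeta$ together with continuity of $\xi\mapsto F(\beta;\xi)$ (e.g.\ from finite-$N$ interpolation) suffices to show any limit point of $\zeta_\beta(\xi_n)$ minimizes $\Par(\cdot;\xi)$. The Ising computation of $\partial_{\gamma_p}\Phi_{\zeta}(0,h)$ via the PDE is, as you acknowledge, the genuinely technical part, and deferring it to the cited references or to Ghirlanda--Guerra is reasonable.
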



\paragraph{The planted model and contiguity} It will be useful to define a \emph{planted} model under which several computations become tractable. Let $\bsigma \sim \mu_0$ and for all $k \ge 2$, $1 \le i_1,\cdots,i_k \le N$,
\begin{equation}\label{eq:pl}
g_{i_1,\cdots,i_k} = \frac{\beta \gamma_k}{N^{(k-1)/2}} \sigma_{i_1}\cdots \sigma_{i_k} + \tilde{g}_{i_1,\cdots,i_k}\,, 
\end{equation}
where $\tilde{g}_{i_1,\cdots,i_k} \sim N(0,1)$ independently of each other and of $\bsigma$. We denote by $\bG$ the collection of random variables $(g_{i_1,\cdots,i_k})_{1 \le i_1,\cdots, i_k \le N, k \ge 2}$ and let $\nu_{\pl}$ be the joint distribution of the pair $(\bG,\bsigma)$ generated this way. 

On the other hand let $\nu_{\rd}$ be the joint law of $(\bG,\bsigma)$ where $g_{i_1,\cdots,i_k} \sim N(0,1)$ i.i.d.\ and $\bsigma \sim \mu_{\beta}$ conditionally on $\bG$. By a straightforward computation their likelihood ratio is given by a normalized version of the partition function of the Gibbs measure $\mu_{\beta}$:
\begin{equation}\label{eq:lr} 
\frac{\de \nu_{\pl}}{\de \nu_{\rd}} (\bG,\bsigma) = \int e^{\beta H_N(\bsigma') - \beta^2 N \xi(1)/2} \de \mu_0(\bsigma')\,. 
\end{equation}%
In particular the above does not depend on $\bsigma$ and will henceforth be denoted by $L_N(\bG)$, and $H_N$ is defined as in Eq.~\eqref{eq:hamiltonian} with disorder coefficients given by $\bG$. 
Since $\bG \mapsto \log L_N(G)$ is $\beta \sqrt{N}$-Lipschitz function and $\frac{1}{N} \E\log L_N(\bG) \to 0$ for $\bG \sim \nu_{\rd}$ and all $\beta \le \beta_c$ by definition of $\beta_c$ Eq.~\eqref{eq:betac}, we have by Gaussian concentration of Lipschitz functions~\cite[Theorem 5.6]{boucheron2013concentration},
\begin{equation} \label{eq:concent}
\P\big( |\log L_N(\bG)| \ge t N\big) \le e^{-cN} \,,
\end{equation}
where $c = c(t,\beta)>0$ for all $t>0$, $\beta \le \beta_c$ and all $N$ sufficiently large. 
This implies the useful fact that any event which is exponentially unlikely under $\nu_{\pl}$ is also exponentially unlikely under $\nu_{\rd}$ (and vice-versa but this won't be used):
\begin{lemma} \label{lem:contig}
Let $\beta\le \beta_c$.     For any sequence of events $(E_N)$ defined on the common probability space of $\nu_{\rd}$ and $\nu_{\pl}$, if $\nu_{\pl}(E_N) \le e^{-cN}$ for $c>0$ and $N$ sufficiently large, then $\nu_{\rd}(E_N) \le e^{-c'N}$ for some $c'>0$ and $N$ sufficiently large.
\end{lemma}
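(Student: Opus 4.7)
The plan is to push the upper bound on $\nu_{\pl}(E_N)$ through to $\nu_{\rd}(E_N)$ by a standard change of measure, absorbing the likelihood ratio $L_N = \de\nu_{\pl}/\de\nu_{\rd}$ using the fact, guaranteed by~\eqref{eq:concent}, that $L_N$ is not exponentially small under $\nu_{\rd}$. Starting from $\nu_{\pl}(E_N)=\E_{\nu_{\rd}}[L_N\one_{E_N}]$, we split the indicator according to whether $L_N$ is above or below a small exponential threshold.

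Concretely, fix a parameter $t\in(0,c)$ to be chosen at the end. Write
\[
\nu_{\rd}(E_N)
\;=\;
\nu_{\rd}\!\big(E_N\cap\{L_N\geq e^{-tN}\}\big)
\;+\;
\nu_{\rd}\!\big(E_N\cap\{L_N< e^{-tN}\}\big).
\]
The second term is bounded by $\nu_{\rd}(L_N<e^{-tN})$, which by the Gaussian concentration statement~\eqref{eq:concent} (applied to the $\beta\sqrt{N}$-Lipschitz function $\log L_N$, whose $\nu_{\rd}$-expectation is $o(N)$ since $\beta\le\beta_c$) is at most $e^{-c_1 N}$ for some $c_1=c_1(t,\beta)>0$. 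For the first term, on the event $\{L_N\ge e^{-tN}\}$ we have $\one_{E_N}\le e^{tN}L_N\one_{E_N}$, so
\[
\nu_{\rd}\!\big(E_N\cap\{L_N\geq e^{-tN}\}\big)
\;\leq\;
e^{tN}\,\E_{\nu_{\rd}}[L_N\one_{E_N}]
\;=\;
e^{tN}\,\nu_{\pl}(E_N)
\;\leq\;
e^{(t-c)N}.
\]
Taking, say, $t=c/2$ yields $\nu_{\rd}(E_N)\le e^{-cN/2}+e^{-c_1 N}\le e^{-c'N}$ for some $c'>0$ and all $N$ large enough, which is the desired conclusion.

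There is no real obstacle: the argument is a textbook change-of-measure truncation, and the only non-trivial input is the concentration bound~\eqref{eq:concent}, which has already been established in the preceding paragraph of the excerpt. The one thing to be a little careful about is that~\eqref{eq:concent} is stated under $\nu_{\rd}$ (i.e.\ for $\bG$ an i.i.d.\ Gaussian tensor), which is exactly what is needed here; no analogous statement under $\nu_{\pl}$ is used.
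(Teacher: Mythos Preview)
Your argument is correct and is exactly the standard change-of-measure truncation that the paper has in mind: the paper does not write out its own proof but refers to Lemma~3.5 of \cite{alaoui2023shattering}, whose short proof via~\eqref{eq:concent} is precisely the splitting you give. There is nothing to add.
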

The above property is referred to in \cite{alaoui2023shattering} as ``contiguity at exponential scale"; see Lemma 3.5 therein for a short proof using \eqref{eq:concent}.
%


\section{Proof of Theorem~\ref{thm:main}}
\label{sec:proofmain}
 We start by establishing a few facts about the local and global behavior of $F_{\beta}$. 
\begin{lemma}
\label{lem:FPbound}
For all $\beta < \beta_{\cont} = \xi''(0)^{-1/2}$, there is $\hat{q}>0$ with $F_{\beta}(q) < F(\beta)$ for all $q \in (0,\hat{q})$.
\end{lemma}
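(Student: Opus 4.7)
The plan is to derive a quadratic upper bound on the Franz--Parisi potential of the form $F_\beta(q) - F(\beta) \le \frac{\beta^2\xi''(0)-1}{2}q^2 + O(q^3)$, whose leading coefficient is strictly negative precisely when $\beta < \beta_{\cont} = \xi''(0)^{-1/2}$. I will obtain this bound from a first-moment computation in the planted model $\nu_{\pl}$ and then transfer it to the random model $\nu_{\rd}$ (which governs $F_\beta$ by definition) using the exponential contiguity of Lemma~\ref{lem:contig} together with Gaussian Lipschitz concentration.

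For the first-moment step, write $Z_\eps(q) := \int \one_{|\langle\sigma,\sigma_0\rangle/N-q|\le\eps}\, e^{\beta H_N(\sigma)}\,\de\mu_0(\sigma)$. Under $\nu_{\pl}$, conditional on $\sigma_0 \sim \mu_0$, each $H_N(\sigma)$ is Gaussian with mean $\beta N\xi(\langle\sigma,\sigma_0\rangle/N)$ and variance $N\xi(1) = N$, so a direct Gaussian moment-generating-function computation and Fubini yield
\[
\E_{\nu_{\pl}}[Z_\eps(q)] = \int \de\mu_0(\sigma_0) \int_{|\langle\sigma,\sigma_0\rangle/N-q|\le\eps} e^{\beta^2 N\xi(\langle\sigma,\sigma_0\rangle/N) + \beta^2 N/2}\,\de\mu_0(\sigma) = e^{N[\beta^2\xi(q) + \beta^2/2 - I(q)] + o(N)},
\]
in the iterated limit $N \to \infty$ then $\eps \to 0$, where $I(q)$ is the large-deviation rate function for the overlap of two independent uniform samples from $\Sigma_N$: $I(q) = -\tfrac{1}{2}\log(1-q^2)$ in the spherical case and $I(q) = \tfrac{1+q}{2}\log(1+q) + \tfrac{1-q}{2}\log(1-q)$ in the Ising case. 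In both geometries $I(0) = I'(0) = 0$ and $I''(0) = 1$. Jensen's inequality then gives the matching upper bound $\tfrac{1}{N}\E_{\nu_{\pl}}\log Z_\eps(q) \le \beta^2\xi(q) + \tfrac{\beta^2}{2} - I(q) + o(1)$.

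To transfer this bound to $\nu_{\rd}$, I use that $\bG \mapsto \log Z_\eps(q)$ is $\beta\sqrt{N}$-Lipschitz, so Gaussian concentration confines $\log Z_\eps(q)/N$ to an $o(1)$ window around its mean with probability $1 - e^{-\Theta(N)}$ under both $\nu_{\pl}$ and $\nu_{\rd}$. Combined with Lemma~\ref{lem:contig}, the event $\{\log Z_\eps(q)/N > \beta^2\xi(q)+\beta^2/2 - I(q)+\delta\}$ is $\nu_{\rd}$-exponentially unlikely for every $\delta > 0$, which together with concentration under $\nu_{\rd}$ forces $\tfrac{1}{N}\E_{\nu_{\rd}}\log Z_\eps(q) \le \beta^2\xi(q) + \beta^2/2 - I(q) + o(1)$. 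Taking $N\to\infty$ and $\eps\to 0$ gives $F_\beta(q) \le \beta^2\xi(q) + \beta^2/2 - I(q)$. Since $F(\beta) = \beta^2/2$ for $\beta\le\beta_c$ and $\xi(q) = \tfrac{\xi''(0)}{2}q^2 + O(q^3)$, we conclude $F_\beta(q) - F(\beta) \le \tfrac{\beta^2\xi''(0)-1}{2}\,q^2 + O(q^3)$, which is strictly negative for $q\in(0,\hat q)$ with $\hat q$ small, provided $\beta < \beta_{\cont}$. I expect the main nuisance to be carefully executing the contiguity-plus-concentration transfer; the Gaussian first moment and the Taylor expansion are otherwise routine.
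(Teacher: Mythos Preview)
Your proposal is correct and follows essentially the same route as the paper's own proof: both compute the first moment of the restricted partition function under the planted model to obtain $F_\beta(q)-F(\beta)\le \beta^2\xi(q)-I(q)$ (the paper writes $+h(q)$ with $h=-I$), transfer this to the random model via Gaussian Lipschitz concentration together with exponential contiguity (Lemma~\ref{lem:contig}), and then inspect the second-order behavior at $q=0$ to conclude. The only cosmetic difference is that you spell out the Taylor expansion explicitly while the paper simply says ``a quick inspection of the first two derivatives.''
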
 

\begin{proof}
The argument relies on a first computation under the planted model $\nu_{\pl}$, as used in \cite{alaoui2022sampling,alaoui2023shattering}. For $\bsigma \in \Sigma_N$ we let $B_{q,\eps}(\bsigma) = \{\bsigma' \in \Sigma_N : |\langle \bsigma , \bsigma'\rangle/N - q|\le \eps\}$. We consider the restricted partition function
\[Z_N = Z_N(\bsigma_0,q,\eps) := \int_{B_{q,\eps}(\bsigma_0)}  e^{\beta H_{N}(\bsigma)} \de \mu_0(\bsigma)\,. \]
Taking expectations under the planted model $(\bG,\bsigma_0) \sim \nu_{\pl}$ defined in Eq.~\eqref{eq:pl} and writing $\tilde{H}_N$ for the Hamiltonian with disorder random variables $(\tilde{g}_{i_1,\cdots,i_k})$ we have 
\begin{align}
\frac{1}{N} \log \bbE_{\plsmall}[Z_N] &= \frac{1}{N} \log \int_{B_{q,\eps}(\bsigma_0)} \, \E e^{\beta \tilde{H}_{N}(\bsigma) + N\beta^2 \xi(\<\bsigma , \bsigma_0\>/N)} \de \mu_0(\bsigma)\nonumber\\
&= \beta^2\xi(q) + \beta^2/2 + \frac{1}{N} \log \mu_0(B_{q,\eps}(\bsigma_0)) + O(\eps)\nonumber\\
&= \beta^2\xi(q) + \beta^2/2 + h(q) + O(\eps) + o_N(1)\,,\label{eq:firstmoment}
\end{align}
where $h(q) =  \frac{1}{2}\log(1-q^2)$ in the spherical case $\Sigma_N = \Sph^{N-1}(\sqrt{N})$ and $h(q) = -\frac{1+q}{2}\log (1+q) - \frac{1-q}{2}\log (1-q)$ in the Ising case $\Sigma_N = \{-1,+1\}^N$. On the other hand the distribution of $Z_N$ under $\nu_{\pl}$ is invariant under the change of variable $g_{i_1,\cdots,i_k} \to g_{i_1,\cdots,i_k}\sigma_{0i_1}\cdots\sigma_{0i_k}$ so without loss of generality we may set $\bsigma_0 = \one$. Next, $\bG \mapsto \log Z_N$ is $\beta\sqrt{N}$-Lipschitz, so by Gaussian concentration we have
\begin{equation} \label{eq:concentr}
\P_{(\bG,\bsigma_0)\sim \nu_{\pl}}\Big(|\log Z_N - \bbE_{\plsmall}\log Z_N|\ge t N\Big) \le 2e^{-t^2N/(2\beta^2)}\,,~~~ t \ge 0\,.
\end{equation}
By contiguity of $\nu_{\pl}$ and $\nu_{\rd}$ at exponential scale (Lemma~\ref{lem:contig}) the event $|\log Z_N - \bbE_{\plsmall}\log Z_N|\ge t N$ has probability at most $e^{-cN}$, $c = c(\beta,t)>0$ under $(\bG,\bsigma_0) \sim \nu_{\rd}$. Using Jensen's inequality and letting $N \to \infty$ followed by $\eps \to 0$ yields the upper bound
\begin{equation}\label{FP:upbd}
F_{\beta}(q) - F(\beta)\le \beta^2\xi(q) + h(q) \, .
\end{equation}
A quick inspection of the first two derivatives at $q=0$ of the function on right-hand side reveals that if $\beta^2\xi''(0) < 1$ then there exists $\hat{q}>0$ such that $\beta^2\xi(q) + h(q) <0$ for $q \in (0,\hat{q})$. This proves the result.
\end{proof}

Now we show that the FP potential is globally dominated by the full free energy up to $\beta_c$: 
\begin{lemma}\label{lem:FPglobal}
For all $\beta < \beta_c$, $F_{\beta}(q) < F(\beta)$ for all $q \in (0,1)$.
\end{lemma}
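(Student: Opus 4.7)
The upper bound $F_{\beta}(q) \le F(\beta)$ is immediate, since $Z_N(B_{q,\eps}(\bsigma_0)) \le Z_N$ and $\log Z_N$ concentrates exponentially (cf.\ Lemma~\ref{lem:contig}). The content of the lemma is strict inequality for $q \in (0,1)$.

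I would split the argument at a threshold $\hat{q}>0$. For $q \in (0,\hat{q})$, the planted/first-moment computation from the proof of Lemma~\ref{lem:FPbound}, combined with the contiguity Lemma~\ref{lem:contig}, already gives the pointwise upper bound
\[
F_{\beta}(q)-F(\beta) \;\le\; \beta^2\xi(q)+h(q).
\]
Because $\beta<\beta_c\le\beta_{\cont}$ forces $\beta^2\xi''(0)<1$, a second-derivative expansion at $q=0$ makes the right-hand side strictly negative on $(0,\hat q)$ for $\hat q$ small. This handles the small-$q$ regime exactly as in Lemma~\ref{lem:FPbound}.

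For $q \in [\hat{q},1)$ this first-moment bound is in general too weak: in the Ising case $\beta^2\xi(1)+h(1)=\beta^2-\log 2$ becomes positive as soon as $\beta>\sqrt{\log 2}$, and one can have $\beta_c>\sqrt{\log 2}$ (e.g.\ for pure $p$-spin Ising models at large $p$). For this regime I would invoke the convex-duality analysis built in Section~\ref{sec:FP}, which represents $F_{\beta}$ as a Legendre transform of a tilted free energy
\[
G_{\beta}(\lambda) := \lim_{N\to\infty}\frac{1}{N}\,\E\log\int e^{\beta H_N(\bsigma)+\lambda\<\bsigma,\bsigma_0\>}\,\de\mu_0(\bsigma),
\]
with $\bsigma_0\sim\mu_{\beta}$ under $\nu_{\rd}$. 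From $\zeta_{\beta}=\delta_0$ (Propositions~\ref{prop:parisi-formula} and~\ref{prop:free-energy-deriv}) one reads off $G'_{\beta}(0)=\lim \E\mu_{\beta}^{\otimes 2}(\<\bsigma_1,\bsigma_2\>/N)=0$, and uniqueness of $\delta_0$ as the Parisi minimizer for $\beta<\beta_c$, together with the strict convexity of the Parisi functional, forces strict convexity of $G_{\beta}$ at $\lambda=0$. Legendre duality then yields
\[
F(\beta)-F_{\beta}(q) \;=\; \sup_{\lambda\in\reals}\big(\lambda q-[G_{\beta}(\lambda)-F(\beta)]\big) \;>\; 0
\]
for every $q\ne 0$, which is the desired strict inequality.

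The principal obstacle is the strict-convexity/duality step: one must rule out that $G_{\beta}$ is affine on a neighborhood of $\lambda=0$, and must handle the Nishimori-like subtlety caused by averaging $\bsigma_0$ over $\mu_{\beta}$ rather than $\mu_0$. Strict convexity of the Parisi functional and uniqueness of $\delta_0$ as the RS Parisi measure for $\beta<\beta_c$ are the essential ingredients; the convex-duality framework of Section~\ref{sec:FP} is exactly tailored to package these into the required identity, so the overall proof is a matter of citing the right consequences of that framework.
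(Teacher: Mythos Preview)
Your duality step is circular. From the Laplace relation $G_\beta(\lambda)=\sup_q\{F_\beta(q)+\lambda q\}$, the subdifferential of the convex function $G_\beta$ at $\lambda=0$ is precisely the argmax set $\{q:F_\beta(q)=F(\beta)\}$; hence $G_\beta$ is differentiable at $0$ if and only if that set is a singleton, which is exactly the conclusion of the lemma. The fact that the finite-$N$ derivatives $g'_N(0)=\E\,\mu_\beta^{\otimes 2}(\la\bsigma_1,\bsigma_2\ra/N)\to 0$ only shows $0\in\partial G_\beta(0)$; it does not rule out a kink. Your appeal to ``strict convexity of the Parisi functional'' concerns the variable $\zeta$, not $\lambda$, and gives no information about strict convexity of $G_\beta$ at $0$. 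Nor does Section~\ref{sec:FP} close the gap: the identity proved there is $F_\beta(q)=\beta^2\xi(q)+\inf_h\{F(\beta,h)-hq\}$, and the positive term $\beta^2\xi(q)$ means that mere differentiability of $F(\beta,\cdot)$ at $h=0$ (with derivative $0$) is not enough to force $F_\beta(q)<F(\beta)$.

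The paper's proof takes a different and more concrete route. After the tilting bound $F_\beta(q)\le\beta^2\xi(q)+\inf_h\{F(\beta,h)-hq\}$, it inserts the replica-symmetric ansatz $\zeta=\delta_q$ together with the \emph{specific} choice $h=\beta^2\xi'(q)$ into the Parisi formula, obtaining an explicit function $\phi_{\RS}(q;\beta)\ge F_\beta(q)$. The crucial external input is that $\sup_q\phi_{\RS}(q;\beta)$ equals the planted free energy $F_{\pl}(\beta)$ \cite{lesieur2017statistical}, which coincides with $F(\beta)=\beta^2/2$ by contiguity. Differentiating $\beta\mapsto F_{\pl}(\beta)$ via the envelope theorem and comparing with the stationarity condition $\phi'_{\RS}(q;\beta)=0$ then forces any maximizer to satisfy $\xi(q)=0$, i.e.\ $q=0$. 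This is the missing idea your sketch does not supply.
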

\begin{proof}
Let $\beta < \beta_c$. We control $F_{\beta}(q)$ by introducing an external field; this idea will be used again to obtain an exact formula for the FP potential. In the present argument only the upper bound is needed. For $h \in \reals$,
\begin{align*}
\frac{1}{N}\log \int_{B_{q,\eps}(\bsigma_0)}  e^{\beta H_{N}(\bsigma)} \de \mu_0(\bsigma) &\le 
\frac{1}{N}\log \int  e^{\beta H_{N}(\bsigma) + h \langle \bsigma_0,\bsigma\rangle} \de \mu_0(\bsigma) - h q + \eps \,, 
\end{align*}
where we recall $B_{q,\eps}(\bsigma) = \{\bsigma' \in \Sigma_N : |\langle \bsigma , \bsigma'\rangle/N - q|\le \eps\}$. Using concentration of the random variable on the left-hand side above and contiguity at exponential scale between the random and planted distributions, exactly as done in Lemma~\ref{lem:FPbound} we obtain 
\begin{equation}\label{eq:FPbound2}
F_{\beta}(q) \le \beta^2 \xi(q) + \inf_{h} \big\{F(\beta,h) -hq\big\}\,,
\end{equation}
where $F(\beta,h) = \lim_{N\to \infty} \frac{1}{N}\E\log \int  e^{\beta H_{N}(\bsigma) + h \langle \bsigma_0,\bsigma\rangle} \de \mu_0(\bsigma)$ is the free energy of $H_N$ augmented with an external field, where without loss of generality we set $\bsigma_0 = \one$. Since this is given by the Parisi formula as stated in Proposition~\ref{prop:parisi-formula} we may use a replica-symmetric ansatz to upper bound $F_{\beta}(q)$: we consider the overlap distribution $\zeta = \delta_{q}$ and plug it in the Parisi formula in the Ising case. From Proposition~\ref{prop:parisi-formula} we obtain 
\begin{equation}\label{eq:FPbound3}
F_{\beta}(q) \le \beta^2 \xi(q) + \inf_{h}\Big\{\E \log \cosh\big(\sqrt{\beta^2 \xi'(q)}z + h\big) + \frac{\beta^2}{2} \big(\xi(1) - \xi(q) - (1-q)\xi'(q)\big) - hq \Big\}\, ,
\end{equation}
where $z \sim N(0,1)$. Next we choose $h = \beta^2\xi'(q)$. After simplification the above becomes 
\begin{equation}\label{eq:FPbound4}
F_{\beta}(q) \le \phi_{\RS}(q;\beta) := \E \log \cosh\big(\sqrt{\beta^2 \xi'(q)}z + \beta^2 \xi'(q)\big) + \frac{\beta^2}{2} \big(\xi(1) + \xi(q) - (1+q)\xi'(q)\big) \, .
\end{equation}
It remains to show that $\phi_{\RS}(q;\beta) < F(\beta) = \beta^2/2$ for all $\beta < \beta_c$ and $q\neq 0$. To this end we observe that the limit of the free energy $F_{\pl}(\beta) := (1/N) \E_{\plsmall} \log Z_N$ of $\mu_{\beta}$ under the planted model~\eqref{eq:pl} can be represented as a \emph{supremum} of $\phi_{\RS}(q;\beta)$ over $q$: 
\begin{equation}\label{eq:FPbound5}
F_{\pl}(\beta) = \sup_{q \ge 0} \phi_{\RS}(q;\beta) \,.
\end{equation}
This was proved for the pure $p$-spin case in~\cite[Theorem 1]{lesieur2017statistical}. Their proof technique is classical in spin glass theory and extends in a straightforward way to the general mixed case.
By contiguity at exponential scale we have $F_{\pl}(\beta) = F(\beta)$ for all $\beta\le\beta_c$.  
Next we show that the above supremum is uniquely achieved at $q=0$ for all $\beta<\beta_c$ thereby finishing the proof. Since $F_{\pl}(\beta) = \beta^2\xi(1)/2$ is differentiable for all $\beta<\beta_c$ (and recall $\xi(1)=1$), by the envelope theorem~\cite{milgrom2002envelope} we have
\begin{equation}\label{eq:rs2}
\beta\xi(1) = F_{\pl}'(\beta) = \beta \xi'(q)\big(\bbE \tanh^2\big(\sqrt{\beta^2 \xi'(q)}z + \beta^2 \xi'(q)\big) - q\big) + \beta(\xi(1)+\xi(q))\,,
\end{equation}
for all $\beta<\beta_c$ where $q$ is any maximizer of $\phi_{\RS}(\,\cdot\,;\beta)$. On the other hand any such maximizer must satisfy $\phi_{\RS}'(q;\beta)=0$. Indeed,
\begin{equation}\label{eq:rs3}
\phi_{\RS}'(q;\beta) = \frac{\beta^2}{2}\xi''(q)\big(\bbE \tanh^2\big(\sqrt{\beta^2 \xi'(q)}z + \beta^2 \xi'(q)\big) - q\big)\,,
\end{equation}
so $q=0$ is a stationary point and $\phi_{\RS}'(1;\beta)<0$ so $q=1$ is not a maximizer. We deduce from~\eqref{eq:rs2} and~\eqref{eq:rs3} that any maximizer must satisfy $\xi(q)=0$ when $\beta<\beta_c$, i.e., $q=0$. 
\end{proof}

\begin{remark}
Lemma~\ref{lem:FPbound} controls the FP potential \emph{locally} in the neighborhood of $q=0$ up to $\beta_\cont$, while Lemma~\ref{lem:FPglobal} provides \emph{global} control up to $\beta_c \le \beta_\cont$. The point is that in a situation where $\beta_c < \beta_\cont$, $q=0$ remains the unique local maximizer of the FP potential up to $\beta_c$, so any other maximizer which might appear at $\beta = \beta_c$ must emerge far away. This is exactly what happens when RSB appears discontinuously as shown in the next lemma.   
\end{remark}

\begin{lemma}\label{lem:discont_FP}
For $\beta \le \beta_c$, if $q \in \supp(\zeta_\beta)$ then $F_{\beta}(q) = F(\beta)$. In particular $F_{\beta}(0) = \beta^2/2$ for all $\beta \le \beta_c$, and if $\xi$ admits a discontinuous RSB transition then $F_{\beta_c}(\bar{q}) =  \beta_c^2/2$, with $\bar{q} = \lim_{\beta\uparrow \beta_c}
    q_{\max}(\zeta_{\beta})$.
 \end{lemma}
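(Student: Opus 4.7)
The plan is to establish the general identity ``$q\in\supp(\zeta_\beta) \Rightarrow F_\beta(q)=F(\beta)$'' valid for any $\beta$ for which the Parisi measure is well-defined, from which both claims follow: applying at $q=0$ for $\beta\le\beta_c$ (where $\zeta_\beta=\delta_0$ by Proposition~\ref{prop:parisi-formula}) gives $F_\beta(0)=\beta^2/2$; and applying at $\beta_n\downarrow\beta_c$ with $q_n:=q_{\max}(\zeta_{\beta_n})\to\bar q$ gives $F_{\beta_c}(\bar q)=\beta_c^2/2$ via continuity.

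For the main identity, the starting point is the exact decomposition
\[
\log Z_N(q,\eps,\bsigma_0) = \log Z_N + \log\mu_\beta\big(A_{q,\eps}(\bsigma_0)\big),\qquad A_{q,\eps}(\bsigma_0):=\{\bsigma: |\langle\bsigma,\bsigma_0\rangle/N-q|\le\eps\},
\]
which reduces the claim to showing $\tfrac{1}{N}\E_\rd\log\mu_\beta(A_{q,\eps})\to 0$ as $N\to\infty$ then $\eps\to 0$. The Jensen upper bound $F_\beta(q)\le F(\beta)$ is immediate from $\mu_\beta(A)\le 1$. For the matching lower bound I would combine two ingredients. \emph{Concentration}: for each fixed $\bsigma_0$, the map $\bG\mapsto\log\mu_\beta(A_{q,\eps}(\bsigma_0))=\log Z_N(q,\eps,\bsigma_0)-\log Z_N$ is $2\beta\sqrt N$-Lipschitz, and under the planted model $\nu_{\pl}$ its conditional $\tilde g$-expectation $M_N(q,\eps)$ is $\bsigma_0$-independent by sign-flip/rotational symmetry, giving Gaussian concentration around $M_N(q,\eps)$ at exponential scale; Lemma~\ref{lem:contig} transfers this to $\nu_\rd$. \emph{Positivity}: by Proposition~\ref{prop:free-energy-deriv} the annealed two-replica overlap distribution converges weakly to $\zeta_\beta$, so
\[
\E_\rd\,\mu_\beta\big(A_{q,\eps}(\bsigma_0)\big)=\E\,\mu_\beta^{\otimes 2}\{|\langle\bsigma,\bsigma'\rangle/N-q|\le\eps\}\to\zeta_\beta\big((q-\eps,q+\eps)\big)>0
\]
for $q\in\supp(\zeta_\beta)$ and any $\eps>0$ avoiding the (at most countably many) atoms of $\zeta_\beta$. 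If $M_N(q,\eps)/N$ tended to a negative limit, concentration would force $\mu_\beta(A_{q,\eps})\le e^{-cN}$ with probability $1-e^{-c'N}$ under $\nu_\rd$, making $\E_\rd\mu_\beta(A_{q,\eps})$ exponentially small and contradicting positivity. Hence $M_N(q,\eps)/N\to 0$, which establishes the identity.

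For the discontinuous case, picking $\beta_n\downarrow\beta_c$ with $q_n=q_{\max}(\zeta_{\beta_n})\to\bar q$ and applying the identity at $(\beta_n,q_n)$ yields $F_{\beta_n}(q_n)=F(\beta_n)$. Passing to the limit requires continuity of $F_\beta(q)$ jointly in $(\beta,q)$ at $(\beta_c,\bar q)$: Lipschitz dependence in $\beta$ follows from the standard estimate $|\partial_\beta\log Z_N(q,\eps)|/N=O(1)$ with high probability, while continuity in $q$ can be extracted by sandwiching $F_\beta(q)$ between $\eps$-regularized averages, with the continuous-in-$q$ upper bound of Lemma~\ref{lem:FPbound} controlling the error as $\eps\to 0$. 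The hardest step is the concentration-and-positivity combination inside the main identity: under $\nu_\rd$ the reference $\bsigma_0$ is not independent of the disorder, so Gaussian concentration cannot be applied directly. The detour through the planted model (in which $\bsigma_0\sim\mu_0$ is decoupled from the noise $\tilde g$) and the subsequent transfer via contiguity at exponential scale are essential; a secondary subtlety is justifying the joint continuity used for the $\beta\downarrow\beta_c$ limit, which leverages the Lipschitz-in-$\beta$ bound together with the uniform-in-$\eps$ first-moment control from Lemma~\ref{lem:FPbound}.
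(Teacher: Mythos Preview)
Your plan for the core implication is essentially the paper's argument run in direct rather than contrapositive form: both route through the planted model, use Gaussian concentration of $\log\mu_\beta(A_{q,\eps}(\bsigma_0))$, and transfer between $\nu_{\pl}$ and $\nu_{\rd}$ via Lemma~\ref{lem:contig}. Two places in your sketch contain genuine gaps.

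First, the \emph{positivity} step. You invoke Proposition~\ref{prop:free-energy-deriv} for weak convergence of the two-replica overlap law to $\zeta_\beta$, so that $\E_{\rd}\,\mu_\beta^{\otimes 2}(A_{q,\eps})\to\zeta_\beta((q-\eps,q+\eps))>0$. But that proposition only matches the moments $\int q^p\,\de\zeta_\beta$ for those $p$ with $\gamma_p\neq 0$; for a non-generic mixture this does not identify the limiting overlap law with the Parisi measure, and your positivity conclusion does not follow. The paper closes exactly this gap by a generic-perturbation detour: it reruns the (contrapositive) argument for a nearby generic $\tilde\xi$ (where \emph{all} polynomial moments match, so the overlap law \emph{is} $\tilde\zeta_\beta$), deduces $\tilde\zeta_\beta([q-\eps,q+\eps])=0$, and then uses continuity of the Parisi measure in the mixture coefficients to return to $\xi$. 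Your argument needs an analogous step.

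Second, the discontinuous ``in particular''. You assert the identity $q\in\supp(\zeta_\beta)\Rightarrow F_\beta(q)=F(\beta)$ ``for any $\beta$ for which the Parisi measure is well-defined'' and then apply it at $\beta_n>\beta_c$ with $q_n=q_{\max}(\zeta_{\beta_n})$. But your own proof relies on Lemma~\ref{lem:contig}, whose hypothesis is $\beta\le\beta_c$; for $\beta>\beta_c$ one has $F(\beta)<\beta^2/2$, the normalized partition function $L_N$ is typically of order $e^{-cN}$, and the transfer from $\nu_{\pl}$ to $\nu_{\rd}$ fails in the direction you need. So as written the identity is not available at $\beta_n>\beta_c$, and your limiting argument $F_{\beta_n}(q_n)=F(\beta_n)\to F_{\beta_c}(\bar q)=\beta_c^2/2$ does not go through. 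The paper's proof of the main implication is likewise confined to $\beta\le\beta_c$; the claim about $\bar q$ is meant to be extracted at $\beta=\beta_c$ together with continuity in $\beta$, not by working above $\beta_c$.
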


\begin{proof}
Suppose $q$ is such that $F_{\beta}(q) < F(\beta)$ and let $\eps>0$. Under the planted model $\nu_{\pl}$,
\begin{align}
\bbE_{\plsmall} \,\mu_{\beta}^{\otimes 2} \big(\{|\langle \bsigma_1, \bsigma_2\rangle/N - q| \le \eps \}\big) &= \bbE_{\plsmall} \,\mu_{\beta} \big(\{|\langle \bsigma_0, \bsigma_1\rangle/N - q| \le \eps \}\big) \nonumber\\
&= \bbE_{\plsmall}\left[\frac{\int_{B_{q,\eps}(\bsigma_0)} e^{\beta H_N(\bsigma)} \de \mu_0(\bsigma)}{\int e^{\beta H_N(\bsigma)} \de \mu_0(\bsigma)}\right]\,.\label{eq:ratio}
\end{align}
The first equality follows from the fact $(\bsigma_0,\bsigma_1) \stackrel{\de}{=} (\bsigma_1,\bsigma_2)$ since $\mu_\beta$ is the posterior distribution of $\bsigma$ given $\bG$ in the model~\eqref{eq:pl}; this is often referred to as Nishimori's identity, see e.g.,~\cite{lelarge2016fundamental}.   
Next, since the two events 
\begin{align*}
    &\big\{(1/N)\log \int_{B_{q,\eps}(\bsigma_0)} e^{\beta H_N(\bsigma)} \de \mu_0(\bsigma) \le F_{\beta}(q) + \eps/2\big\}\\
    &\mbox{and}~~~
\big\{(1/N)\log \int e^{\beta H_N(\bsigma)} \de \mu_0(\bsigma) \ge F(\beta) - \eps/2\big\}
\end{align*}
both have probability at least $1-e^{-c(\beta,\eps)N}$, 
$c(\beta,\eps)>0$, we obtain that the right-hand side of~\eqref{eq:ratio} is bounded by $e^{-c'N}$, $c'=c'(\beta,\eps)>0$. 
Then by contiguity at exponential scale, Lemma~\ref{lem:contig}, we obtain
\begin{equation}\label{eq:bound2}
\E \mu_{\beta}^{\otimes 2} \big(\{|\langle \bsigma_1, \bsigma_2\rangle/N - q| \le \eps \}\big) \le e^{-c''N}\,,~~~~c'' = c''(\eps,\beta)>0\,,
\end{equation}
and $\eps$ sufficiently small.
This implies $\zeta_{\beta}([q-\eps,q+\eps]) = 0$ via a standard approximation argument: we approximate the mixture $\xi$ with a \emph{generic} mixture containing a non-zero monomial of every degree, i.e., $\gamma_p \neq 0$ for all $p \ge 3$, and let $\tilde{\zeta}_{\beta}$ be the corresponding Parisi measure.     
By approximating the indicator function appearing in~\eqref{eq:bound2} with polynomials and using Proposition~\ref{prop:free-energy-deriv}, we obtain  
\[\tilde{\zeta}_{\beta}([q-\eps,q+\eps]) = 0\,,\]
by taking $N\to \infty$. Finally since the Parisi measure is continuous in the coefficients of the mixture $(\gamma_p)$ this implies $\zeta_{\beta}([q-\eps,q+\eps]) = 0$ for all $\eps$ sufficiently small, and therefore $q \notin \supp(\zeta_{\beta})$. 
\end{proof}

Now we are ready to prove Theorem~\ref{thm:main}:
\begin{proof}[Proof of Theorem~\ref{thm:main}]
 On the one hand by Lemma~\ref{lem:FPbound} we have $F_{\beta}(q) < F(\beta)$ for all $\beta < \beta_c$ (since $\beta_{c} < \beta_{\cont}$ by assumption) and $q$ in a small neighborhood of zero: there exists $\underline{q}, \eps>0$ depending only on $\beta_c$ such that $F_{\beta}(\underline{q})\le F_{\beta}(0)-\eps$ for all $\beta \le \beta_c$. Furthermore, since $\xi$ admits a discontinuous RSB transition, $\bar{q} = \lim_{\beta\downarrow \beta_c}
    q_{\max}(\zeta_{\beta})>0$, and we can take $\eps>0$ small enough so that $\underline{q} <  \bar{q}$.

On the other hand,
as a consequence of Lemma~\ref{lem:discont_FP} and continuity of $\beta \mapsto F_{\beta}(q) - \beta^2/2$, there exists $\delta>0$ such that 
\[\beta^2/2 - \eps/2 \le F_{\beta}(\bar{q}) \le \beta^2/2 \, ,~~~ \forall \beta \in [\beta_c-\delta,\beta_c]\,.\]
It follows that $F_{\beta}(\underline{q}) < F_{\beta}(\bar{q})$.
\end{proof}

\section{The Franz--Parisi potential}
\label{sec:FP}

Here we compute the Franz--Parisi potential $F_{\beta}(q)$ for $\beta\leq\beta_c$ in the Ising case. The spherical case was previously treated in \cite{alaoui2023shattering} via a technique leveraging spherical symmetry. We provide here a derivation in the Ising case via a duality-based route.

Here $\mu_0$ is the uniform distribution on $\Sigma_N=\{-1,+1\}^N$. We proceed by duality with a model with non-zero external field. 
For $h \in \reals$ we consider the free energy with external field $h \mathbf{1}$:  
 \begin{align}
F(\beta,h) := \lim_{N\to \infty} \frac{1}{N} \E \int \sum_{\bsigma \in \Sigma_N} e^{\beta H_N(\bsigma) + h\langle \mathbf{1} , \bsigma \rangle} \de\mu_0(\bsig).
\end{align}

\begin{proposition}\label{prop:duality}
$F(\beta, h)$ is continuously differentiable in $h$.
Consequently for any $q \in (-1,1)$, the equation 
\begin{equation}\label{eq:stateq}
    \frac{\de}{\de h} F(\beta, h) = q\, .
\end{equation} 
has a solution $h = \sup\{h: \frac{\de}{\de h} F(\beta, h) = q\}$.
Hence the map $q \mapsto h(q)$ is strictly increasing.
Finally $\frac{\de}{\de h} F(\beta, 0)=0$.
\end{proposition}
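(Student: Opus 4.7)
The plan is to leverage convexity of $F(\beta,\cdot)$, the sign-flip symmetry $\bsigma\mapsto -\bsigma$, and the Parisi variational formula with its unique minimizer. I would first observe that at finite $N$, $h\mapsto \frac{1}{N}\log\int e^{\beta H_N(\bsigma)+h\la\one,\bsigma\ra}\de\mu_0(\bsigma)$ is convex in $h$ as the logarithm of a sum of exponentials linear in $h$, so $F(\beta,\cdot)$ is convex. The transformation $\bsigma\mapsto -\bsigma$ preserves $\mu_0$ and sends $H_N(\bsigma)$ to $H_N(-\bsigma)$, which has the same law (odd-degree Gaussian coefficients merely flip sign) while negating $h\la\one,\bsigma\ra$; this gives $F(\beta,h)=F(\beta,-h)$.

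For continuous differentiability I would appeal to the Parisi formula from Proposition~\ref{prop:parisi-formula}: $F(\beta,h)=\inf_\zeta \Par^{\Is}_\beta(\zeta;\xi,h)$ with a unique minimizer $\zeta_\beta(h)\in\cP([0,1])$. The envelope argument underlying Proposition~\ref{prop:free-energy-deriv} yields $\partial_h F(\beta,h)=\partial_x\Phi_{\zeta_\beta(h)}(0,h)$ at points of differentiability (a full-measure set by convexity). Continuity of $h\mapsto \zeta_\beta(h)$ in the weak-$*$ topology, established as in Proposition~\ref{prop:free-energy-deriv} from uniqueness of the minimizer, strict convexity of the Parisi functional, and weak-$*$ compactness of $\cP([0,1])$, combined with joint continuity of $(\zeta,h)\mapsto \partial_x\Phi_\zeta(0,h)$, shows the right-hand side is continuous in $h$; a standard convex-analysis fact then upgrades the derivative to exist and be continuous everywhere.

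Next I would identify the range of $\partial_h F$. Since $\partial_h F$ is the limit of magnetizations $\bbE\mu_\beta(\la\one,\bsigma\ra/N)\in[-1,1]$, one has $|\partial_h F|\le 1$. Restricting the partition function to $\bsigma=\one$ gives the crude bound $F(\beta,h)\ge h-\log 2$ (using $\bbE H_N(\one)=0$); combined with monotonicity of $\partial_h F$ this forces $\partial_h F(\beta,h)\to 1$ as $h\to\infty$, and by evenness $\partial_h F(\beta,h)\to -1$ as $h\to-\infty$. The intermediate value theorem then shows the level set $\{h:\partial_h F(\beta,h)=q\}$ is nonempty for every $q\in(-1,1)$, and the asymptotics above plus continuity make it compact, so its supremum is attained.

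The two remaining claims follow routinely: strict monotonicity of $q\mapsto h(q)$ is immediate, since $q_1<q_2$ together with $h(q_1)\ge h(q_2)$ would give $q_1=\partial_h F(\beta,h(q_1))\ge \partial_h F(\beta,h(q_2))=q_2$ by monotonicity of $\partial_h F$, a contradiction; and $\partial_h F(\beta,0)=0$ follows by differentiating $F(\beta,h)=F(\beta,-h)$ at $h=0$. I expect the main obstacle to be the continuous differentiability step, which rests on uniqueness and parameter-continuity of the Parisi minimizer; everything else is a standard combination of convex analysis and the intermediate value theorem.
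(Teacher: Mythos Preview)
Your proposal is correct and follows essentially the same route as the paper: the paper also invokes the envelope theorem together with Proposition~\ref{prop:free-energy-deriv} to obtain continuous differentiability via $\partial_h F(\beta,h)=\partial_x\Phi_{\zeta_\beta(\xi,h)}(0,h)$, uses convexity of the pre-limit log-partition function, the asymptotics $\partial_h F(\beta,h)\to\pm 1$ as $h\to\pm\infty$, and the symmetry $F(\beta,h)=F(\beta,-h)$ for the final claim. Your write-up is actually more detailed than the paper's terse proof, which leaves the justification of the limits $\partial_h F\to\pm 1$ and the strict monotonicity of $q\mapsto h(q)$ implicit.
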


\begin{proof}
Proposition~\ref{prop:free-energy-deriv} and the envelope theorem \cite{milgrom2002envelope} implies that $h\mapsto F(\beta,h)$ is continuously differentiable with 
\[
\frac{\partial F(\beta,h)}{\partial h}
=
\partial_x \Phi_{\zeta_{\beta}(\xi,h)}(0,h).
\]
Furthermore, $F(\beta,h)$ is convex in $h$ because $\log \int e^{\beta H_N(\bsigma) + h\langle \mathbf{1} , \bsigma \rangle} \de\mu_0(\bsig)$ is.
Further since $\frac{\de}{\de h} F(\beta,h)\to \pm 1$ for $h \to \pm \infty$, there is a solution $h$ to Eq.~\eqref{eq:stateq} for any $q \in (-1,1)$. 
The last assertion follows since $F(\beta,h)=F(\beta,-h)$ by symmetry.
\end{proof}

\begin{proposition}\label{prop:fp_ising}
For $\beta < \beta_c$ and $q \in (-1,1)$, the function $F_{\beta}$ is 
given by:
\[
F_{\beta}(q) = F(\beta,h(q)) - h(q)q + \beta^2 \xi(q) \, .
\]
\end{proposition}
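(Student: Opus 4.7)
The plan is to combine the upper bound derived within the proof of Lemma~\ref{lem:FPglobal} with a matching lower bound via the planted model and convex duality. The upper bound comes essentially for free: display~\eqref{eq:FPbound2} gives
\[
F_{\beta}(q) \le \beta^2\xi(q) + \inf_{h}\{F(\beta,h) - hq\},
\]
and since $F(\beta,\cdot)$ is convex and $C^1$ by Proposition~\ref{prop:duality}, the infimum is attained at the stationary point $h=h(q)$, giving the claimed upper bound.

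For the lower bound I would compute $F_\beta(q)$ under the planted model $\nu_{\pl}$; Lemma~\ref{lem:contig} together with Gaussian concentration of $\bG \mapsto \log Z_N(\bsigma_0,q,\eps)$ transfers the limiting value back to $\nu_{\rd}$ for $\beta\le\beta_c$. Under $\nu_{\pl}$, the direct expansion performed inside Lemma~\ref{lem:FPbound} yields the planted decomposition $H_N(\bsigma) = N\beta\,\xi(\la\bsigma,\bsigma_0\ra/N) + \tilde H_N(\bsigma)$, with $\tilde H_N$ an independent Gaussian Hamiltonian of the same type. On $B_{q,\eps}(\bsigma_0)$ the planted signal factors out, so with $\tilde Z_N(\bsigma_0,q,\eps) := \int_{B_{q,\eps}(\bsigma_0)} e^{\beta \tilde H_N(\bsigma)}\de\mu_0(\bsigma)$ one has
\[
Z_N(\bsigma_0,q,\eps) = e^{\beta^2 N\xi(q) + O(\eps N)}\,\tilde Z_N(\bsigma_0,q,\eps),
\]
and coordinate sign-flip symmetry of $\mu_0$ and of the law of $\tilde H_N$ lets us take $\bsigma_0 = \one$. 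The lower bound on $F_\beta(q)$ then reduces to showing $N^{-1}\E\log\tilde Z_N(\one,q,\eps)\to F(\beta,h(q))-h(q)q$ as $N\to\infty$ and then $\eps\downarrow 0$.

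For this I would consider the Gibbs measure $\mu_{h(q)}\propto e^{\beta\tilde H_N + h(q)\la\one,\bsigma\ra}\mu_0$ with limiting free energy $F(\beta,h(q))$. Proposition~\ref{prop:duality} tells us $F(\beta,\cdot)$ is $C^1$ with derivative $q$ at $h(q)$, and a standard exponential tilting argument—for small $\eps'>0$ apply Markov to $e^{\pm\eps' \la\one,\bsigma\ra}$ and use $F(\beta,h(q)\pm\eps')-F(\beta,h(q)) = \pm\eps' q + o(\eps')$—shows the magnetization $\la\one,\bsigma\ra/N$ concentrates exponentially around $q$ under $\mu_{h(q)}$, i.e., $\mu_{h(q)}(B_{q,\eps}(\one))\to 1$ in probability. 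Consequently $Z_N(h(q),B_{q,\eps}(\one)) \ge (1-o(1))Z_N(h(q))$; peeling off the bound $e^{h(q)\la\one,\bsigma\ra}\le e^{Nh(q)q+N|h(q)|\eps}$ on $B_{q,\eps}(\one)$ and invoking the Parisi formula $N^{-1}\E\log Z_N(h(q))\to F(\beta,h(q))$ yields $N^{-1}\log\tilde Z_N(\one,q,\eps) \ge F(\beta,h(q)) - h(q)q - |h(q)|\eps + o(1)$, and sending $\eps\downarrow 0$ finishes the lower bound. The main technical hurdle is this tilting/concentration step for the magnetization, where continuous differentiability of $F(\beta,\cdot)$ from Proposition~\ref{prop:duality} is essential; the remaining ingredients—planted decomposition, sign-flip reduction, contiguity transfer, and the Parisi formula—are routine bookkeeping.
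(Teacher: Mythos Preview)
Your proposal is correct and rests on the same ingredients as the paper: the planted-model reduction (factoring out $\beta^2\xi(q)$ and reducing to $\bsigma_0=\one$ via sign-flip symmetry), contiguity at exponential scale, and Legendre duality with the external-field free energy $F(\beta,h)$.

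The organization differs. The paper first proves the full Legendre identity
\[
F(\beta,h)=\sup_{q\in[-1,1]}\big\{F_\beta(q)-\beta^2\xi(q)+hq\big\}
\]
by a covering argument over overlap shells (together with concentration to swap the maximum and expectation), and then inverts via the envelope theorem to extract $F_\beta(q)$. You instead bound $F_\beta(q)$ from both sides directly: the upper bound is the already-established~\eqref{eq:FPbound2} evaluated at the stationary point $h(q)$, and for the lower bound you show that the tilted Gibbs measure $\mu_{h(q)}$ concentrates on the shell $B_{q,\eps}(\one)$ via an exponential-tilting Markov bound. The covering argument and your tilting argument are dual to one another, and both ultimately hinge on the $C^1$ regularity of $h\mapsto F(\beta,h)$ from Proposition~\ref{prop:duality}. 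Your route is slightly more direct for the specific value $h(q)$, while the paper's route yields the full duality relation as a byproduct.
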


\begin{proof}
We will show that for all $h \in \reals$ and $\beta < \beta_c$, 
 \begin{equation}\label{eq:legendre}
 F(\beta,h) = \sup_{q \in [-1,1]} \Big\{F_\beta(q) - \beta^2 \xi(q) + hq\Big\}\,.
 \end{equation}

Momentarily assuming this, by continuity in $q$ the above supremum is achieved. Since $F(\beta,h)$ is convex and continuously differentiable in $h$, we obtain by the envelope theorem that $\frac{\de}{\de h}F(\beta,h) = q^*$ for any $q^* \in \argmax_{q \in [-1,1]} \{F_{\beta}(q) - \beta^2\xi(q) + hq\}$. Therefore the maximum is uniquely attained at $q(h) = \frac{\de}{\de h}F(\beta,h) \in (-1,1)$. By Proposition~\ref{prop:duality} this function is surjective with right-continuous inverse $h$, and we obtain
\[F_{\beta}(q) = F(\beta,h(q)) +\beta^2 \xi(q) - h(q)q\,.\]  

Now we show Eq.~\eqref{eq:legendre}.
For $\bsigma_0 \in \Sigma_N$ we recall the notation $B_{q,\eps}(\bsigma_0) = \{\bsigma \in \Sigma_N : |\langle \bsigma_0 , \bsigma\rangle/N - q|\le \eps\}$. 
First, using the planted model we have
\[F_{\beta}(q) = \beta^2\xi(q) + \lim_{\eps \to 0}\lim_{N \to \infty}\frac{1}{N} \E\log \int_{B_{q,\eps}(\bsigma_0)}  e^{\beta H_N(\bsigma)} \de\mu_0( \bsigma)\,,\]
where $\bsigma_0 \sim \mu_0$. By symmetry we may assume that $\bsigma_0 = \mathbf{1}$.   
For the lower bound, 
\begin{align*}
    \frac{1}{N} \E \log \int  e^{\beta H_N(\bsigma) + h\langle \bsigma_0 , \bsigma \rangle} \de\mu_0( \bsigma) &\ge \frac{1}{N} \E \max_{q \in \eps \Z \cap [-1,1]} \log \int_{B_{q,\eps}(\bsigma_0)}  e^{\beta H_N(\bsigma) + h\langle \bsigma_0 , \bsigma \rangle} \de\mu_0( \bsigma)\\ 
   &\ge  \max_{q \in \eps \Z \cap [-1,1]} \frac{1}{N} \E\log \int_{B_{q,\eps}(\bsigma_0)}  e^{\beta H_N(\bsigma)} \de\mu_0( \bsigma) + h(q-\eps) \, .
\end{align*}
Taking $N \to \infty$ then $\eps \to 0$ we obtain 
\[F(\beta,h) \ge \sup_{q \in [-1,1]} \Big\{F_{\beta}(q) - \beta^2\xi(q) + hq\Big\}\,.\]
As for the upper bound, we have 
\begin{align*}
   F(\beta, h)  &=  \frac{1}{N} \E \log \Big(\sum_{q \in \eps \Z \cap [-1,1]} \int_{B_{q,\eps}(\bsigma_0)} e^{\beta H_N(\bsigma) + h\langle \bsigma_0 , \bsigma \rangle} \de\mu_0( \bsigma)\Big)\\ 
   &\le \lim_{N\to \infty} \frac{1}{N} \E \max_{q \in \eps \Z \cap [-1,1]} \log \int_{B_{q,\eps}(\bsigma_0)} e^{\beta H_N(\bsigma) + h\langle \bsigma_0 , \bsigma \rangle} \de\mu_0( \bsigma) + \frac{\log(2/\eps)}{N} \\
   &\le  \frac{1}{N} \E\max_{q \in \eps \Z \cap [-1,1]} \log \int_{B_{q,\eps}(\bsigma_0)} e^{\beta H_N(\bsigma)} \de\mu_0( \bsigma) + h(q +\eps) + \frac{\log(2/\eps)}{N}\\
   &\le   \max_{q \in \eps \Z \cap [-1,1]}\frac{1}{N} \E \log \int_{B_{q,\eps}(\bsigma_0)} e^{\beta H_N(\bsigma)} \de\mu_0( \bsigma) + h(q +\eps) + \frac{\log(2/\eps)}{N}+o_N(1)\,.
\end{align*}
The last line follows by a standard application of the tail bound $\P(\log Z_N - \E \log Z_N \ge t) \le e^{-t^2/(2\beta^2 N)}$ where $Z_N = \int_{B_{q,\eps}(\bsigma_0)} e^{H_N(\bsigma)} \de \mu_0(\bsigma)$ (c.f.~\eqref{eq:concentr}). This allows us to swap the order of the maximum and the expectation via a union bound over the finite set $\eps\Z \cap [-1,1]$. Letting $N \to \infty$ then $\eps \to 0$ we obtain the converse bound, therefore establishing~\eqref{eq:legendre}.
\end{proof}

\section{An alternative shattering criterion on the sphere}
\label{sec:spherical}

On the sphere, an alternate criterion for shattering at inverse temperature $\beta$ is that there exists $q\in (0,1)$ with 
\begin{equation}
\label{eq:dynamic-condition}
    \beta^2 \xi'(q)>\frac{q}{1-q} \,. 
\end{equation}
Here we show a similar result for this alternative criterion.
Here, relevant thresholds become more explicit. By~\cite[Proposition 2.3]{talagrand2006spherical}, the static replica symmetry breaking inverse temperature $\beta_c$ is given by the formula
\begin{equation}
\label{eq:beta-c-sphere}
\beta_c
=
\sqrt{\inf_{q \in (0, 1)} \frac{-\log(1-q)-q}{\xi(q)}}\,.
\end{equation}
On the other hand we define the dynamical inverse temperature $\beta_d$ by
\begin{align*}
\beta_d
&=
\min(\bar\beta_d,\beta_c)\,,
\\
\bar\beta_d
&=
\sqrt{\inf_{q \in (0, 1)} \frac{q}{(1-q)\xi'(q)}}\,, 
\end{align*}
which is the onset of \eqref{eq:dynamic-condition}.

We define separate critical temperatures for continuous and discontinuous breaking of replica-symmetry.
Recall $\beta_{\cont}= (\xi''(0))^{-1/2}$; this is the threshold at which $\delta_0$ becomes unstable to local perturbations, and let $\beta_{\dis}^2$ be the smallest value of 
\begin{equation}\label{eq:kappa}
\kappa(q)
=
\frac{-\log(1-q)-q}{\xi(q)}
\end{equation}
at any of its nonzero critical points.
This is the threshold for an atom to emerge away from $0$. Indeed, Proposition 2.1 of~\cite{talagrand2006spherical} shows that the support of the Parisi measure $\zeta_{\beta}$ is contained in the set of maximizers of a function $f: [0,1]\to \R$ which for $\beta \le \beta_c$ takes the form   
\begin{equation}\label{eq:f}
f(q) = \beta^2 \xi(q) + \log(1-q)+q\,.
\end{equation}
For $\beta \le \beta_c$, since $\zeta_{\beta} = \delta_0$, this function is nonpositive with $q=0$ as a maximizer. Therefore if $\xi$ admits a discontinuous RSB transition, there must exist a second maximizer $q_c \neq 0$ at $\beta = \beta_c$, i.e., $f(q_c) = 0$. This point is a nonzero  minimizer of $\kappa$ which is also a critical point: $\kappa'(q_c)=0$.      
It is then immediate from \eqref{eq:beta-c-sphere} that 
\[
\beta_c
=
\min(\beta_{\cont},\beta_{\dis})\,.
\]

\begin{proposition}
    Suppose $\xi$ admits a discontinuous RSB transition on the sphere and $\beta_c < \beta_\cont$.
    Then there exists $q$ with $\beta_c^2 \xi'(q)>\frac{q}{1-q}$.
\end{proposition}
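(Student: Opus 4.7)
The plan is to work with the scalar function $f(q) = \beta_c^2 \xi(q) + \log(1-q) + q$ from \eqref{eq:f}, since $f'(q) = \beta_c^2 \xi'(q) - q/(1-q)$. Thus the claimed inequality is exactly $f'(q) > 0$ for some $q \in (0,1)$, and the whole proof reduces to showing that $f$ has a positive slope somewhere on $(0,1)$.

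First I would extract structure from the hypothesis $\beta_c < \beta_\cont$. Combined with the identity $\beta_c = \min(\beta_\cont, \beta_\dis)$ established just above the proposition, this forces $\beta_c = \beta_\dis$. By the definition of $\beta_\dis$ via \eqref{eq:kappa}, there exists a nonzero critical point $q_c \in (0,1)$ of $\kappa$ with $\kappa(q_c) = \beta_c^2$. Unpacking $\kappa(q) = (-\log(1-q)-q)/\xi(q)$, the relation $\kappa(q_c) = \beta_c^2$ is exactly $f(q_c) = 0$; and a direct differentiation of $\kappa$ combined with the substitution $\log(1-q_c)+q_c = -\beta_c^2\xi(q_c)$ inside $\kappa'(q_c) = 0$ yields $\beta_c^2\xi'(q_c) = q_c/(1-q_c)$, i.e.\ $f'(q_c) = 0$.

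Next I would invoke Talagrand's inclusion (support of $\zeta_{\beta_c}$ contained in the maximizers of $f$), recalled right before \eqref{eq:f}, together with $\zeta_{\beta_c} = \delta_0$ to conclude that $0$ is a global maximizer, so $\max_{[0,1)} f = f(0) = 0$ and in particular $f \le 0$ on $[0,1)$. Then I would use $\beta_c < \beta_\cont = \xi''(0)^{-1/2}$ to compute
\[
f''(0) = \beta_c^2 \xi''(0) - 1 < 0,
\]
which (together with $f(0)=0$ and $f'(0) = \beta_c^2 \xi'(0) = 0$) makes $q = 0$ a strict local maximum of $f$. Hence there exists $q_0 \in (0,q_c)$ with $f(q_0) < 0$.

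Finally, since $f(q_0) < 0 = f(q_c)$ with $q_0 < q_c$, the mean value theorem produces some $q \in (q_0, q_c)$ at which
\[
f'(q) \;=\; \frac{f(q_c)-f(q_0)}{q_c-q_0} \;>\; 0,
\]
which is exactly $\beta_c^2 \xi'(q) > q/(1-q)$. There is no serious obstacle here: the real content is the identification $\beta_c = \beta_\dis$ supplied by the structural formula $\beta_c = \min(\beta_\cont,\beta_\dis)$, after which the argument is a short calculus exercise observing that between two points of equal $f$-value, $f$ must increase somewhere once it is forced to dip in between.
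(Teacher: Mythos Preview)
Your proof is correct and follows essentially the same route as the paper: both reduce to the function $f$ of \eqref{eq:f}, locate $q_c\in(0,1)$ with $f(q_c)=0$, and extract a point of positive slope via the mean-value theorem. The only minor difference is in securing strict inequality: you use $\beta_c<\beta_{\cont}$ to force $f$ strictly below $0$ between $0$ and $q_c$, whereas the paper observes that $f'\equiv 0$ on $(0,q_c)$ would (by analyticity) force $\beta_c^2\xi'(t)=t/(1-t)$ globally and contradict $\xi(1)<\infty$, so the paper's argument never actually invokes the hypothesis $\beta_c<\beta_{\cont}$.
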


\begin{proof}
    By the above discussion, a discontinuous RSB transition means there exists $q_c\in (0,1)$ with
    \[
    f(q_c) = \beta_c^2 \xi(q_c)+q_c+\log(1-q_c)=0\,.
    \]
    By the mean-value theorem, there exists $q_d\in [0,q_c]$ with $\xi'(q_d)\geq \frac{q_d}{1-q_d}$.
    Strict inequality holds for some $q_d$ unless equality holds on all of $(0,q_c)$; this cannot be since $\xi(1)$ is finite.
\end{proof}

We also prove that shattering can occur together with a continuous RSB transition, essentially since the converse of the mean-value theorem is false.

\begin{proposition}
For any $p\geq 3$, there is $\gamma_p$ such that $\xi(q)=\frac{q^2}{2}+\gamma_p^2 q^p$ admits a continuous RSB transition, $\beta_c=1$, but $\beta_d<1$.
\end{proposition}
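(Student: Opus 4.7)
The plan is to reduce each of the three requirements to an explicit inequality on $\gamma_p^2$ and exhibit a nonempty window of admissible values. Since $\xi''(0)=1$, we immediately get $\beta_{\cont}=1$. The formula \eqref{eq:beta-c-sphere} combined with $\lim_{q\downarrow 0}\kappa(q)=1$ shows $\beta_c\leq 1$, and $\beta_c=1$ is equivalent to $\kappa(q)\geq 1$ on $(0,1)$, i.e.
\[
\gamma_p^2\, q^p \leq h(q)\ \text{ for all }q\in(0,1),\qquad h(q):=-\log(1-q)-q-\tfrac{q^2}{2}=\sum_{k\geq 3}\tfrac{q^k}{k}.
\]
This is in turn equivalent to $\gamma_p^2\leq A_p:=\inf_{q\in(0,1)}h(q)/q^p$. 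I will work with the strict inequality $\gamma_p^2<A_p$: then $\kappa(q)>1$ on $(0,1)$, so no nonzero critical point of $\kappa$ attains the value $1$. Since by the discussion following \eqref{eq:f} a discontinuous RSB transition would require exactly such a point (a nonzero $q_c$ with $\kappa(q_c)=\beta_c^2$), the transition must be continuous.

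Since $\beta_c=1$, having $\beta_d<1$ is equivalent to $\bar\beta_d<1$, i.e., to the existence of $q\in(0,1)$ with $\xi'(q)(1-q)>q$. Plugging in $\xi'(q)=q+p\gamma_p^2 q^{p-1}$, this rearranges to $p\gamma_p^2 q^{p-3}(1-q)>1$. The function $q\mapsto q^{p-3}(1-q)$ is maximized at $q_\star=(p-3)/(p-2)$ with value $(p-3)^{p-3}/(p-2)^{p-2}$, so the condition $\bar\beta_d<1$ becomes $\gamma_p^2>1/B_p$ where $B_p:=p(p-3)^{p-3}/(p-2)^{p-2}$. The proposition thus reduces to showing $A_p\cdot B_p>1$, after which any $\gamma_p^2\in(1/B_p,A_p)$ satisfies all three claims simultaneously.

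For this final inequality I would proceed by an asymptotic analysis. Fermat's rule for the minimizer $q_p$ of $h(q)/q^p$, combined with $h'(q)=q^2/(1-q)$, yields the implicit relation $q_p^3=p(1-q_p)h(q_p)$. For large $p$ this forces $q_p\to 1$ at the slow rate $1-q_p\sim (p\log p)^{-1}$, which gives $A_p\sim \log p$, while $B_p\to e^{-1}$ (since $(1-1/(p-2))^{p-3}\to e^{-1}$); hence $A_p B_p\to \infty$. For any concrete moderate value of $p$, the inequality $A_p B_p>1$ reduces to a one-variable calculus check on the explicit function $h(q)/q^p$. The main obstacle is making the large-$p$ asymptotic for $A_p$ rigorous, since the minimizer approaches the boundary $q=1$ only logarithmically and requires careful control of the slowly-varying $-\log(1-q_p)$ factor; a secondary caveat is that the window $(1/B_p,A_p)$ shrinks as $p$ decreases, so the construction only works once $p$ is large enough to make this window nonempty.
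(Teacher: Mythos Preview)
Your reduction matches the paper's: with $\xi''(0)=1$, the three requirements collapse to the single strict inequality $1/B_p<A_p$, where $A_p=\inf_{q\in(0,1)}h(q)/q^p$ and $1/B_p=\inf_{q\in(0,1)}\frac{1}{pq^{p-3}(1-q)}$.

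The gap is that you do not actually prove this inequality for all $p\geq 3$. Your asymptotics ($A_p\sim\log p$, $B_p\to e^{-1}$) are correct but only handle large $p$; for moderate $p$ you defer to ``one-variable calculus checks'' that are never carried out, and your closing sentence even concedes the window may be empty for small $p$. Since the proposition claims the result for \emph{every} $p\geq 3$, this leaves the proof unfinished. The paper avoids both the asymptotics and the case analysis with the identity
\[
\frac{h(q)}{q^p}=\frac{\int_0^q \tfrac{t^2}{1-t}\,\de t}{\int_0^q p\,t^{p-1}\,\de t},
\]
which exhibits $h(q)/q^p$ as a weighted average (weight $p t^{p-1}$) of $t\mapsto \tfrac{1}{pt^{p-3}(1-t)}$ over $(0,q)$; hence $h(q)/q^p\geq 1/B_p$ for every $q$, with strict inequality because the integrand is non-constant. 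For $p\geq 4$ both infima are attained at interior points, so this pointwise strict inequality immediately gives $A_p>1/B_p$ without any asymptotic or numerical work. Your doubt about small $p$ is in fact well-placed at the endpoint $p=3$: there both infima equal $1/3$ (approached as $q\to 0^+$, not attained), so $A_3=1/B_3$ and the open window $(1/B_3,A_3)$ collapses; this boundary case is delicate in both arguments.
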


\begin{proof}
By Talagrand's characterization of the support of $\zeta_\beta$, see Eq~\eqref{eq:f}, $\xi$ admits a continuous RSB transition if and only if $f(q)<0$ for all $q \neq 0$ at $\beta = \beta_{\cont} = 1$. Equivalently, 
\[
\gamma_p^2 
\leq 
\min_{q\in (0,1)}
\frac{\sum_{k\geq 3} q^k /k}{q^p}\, .
\]
On the other hand, there exists $q$ with $\xi'(q)>\frac{q}{1-q}$ if 
\[
\gamma_p^2 
>
\min_{q\in (0,1)}
\frac{q^2}{pq^{p-1}(1-q)}\,.
\]
It therefore remains to prove that
\begin{equation}
\label{eq:remains-to-prove}
\min_{q\in (0,1)}
\frac{q^2}{pq^{p-1}(1-q)}
<
\min_{q\in (0,1)}
\frac{\sum_{k\geq 3} q^k /k}{q^p}\,.
\end{equation}
Indeed,
\[
\frac{\sum_{k\geq 3} q^k /k}{q^p}
=
\frac{\int_0^q \frac{t^2}{1-t}\de t}{\int_0^q pt^{p-1}\de t}
\,.
\]
This implies \eqref{eq:remains-to-prove} with non-strict inequality.
Strictness follows because the function $\frac{t^2}{pt^{p-1}(1-t)}$ is non-constant on any open set. 
\end{proof}

We end this section with a lemma on the genericity of the condition $\beta_c < \beta_{\cont}$. We emphasize the dependence on $\xi$ by writing $\beta_{\cont}(\xi)$ and $\beta_c(\xi)$.

\begin{proposition}\label{prop:beta-cont-bound}
We have $\beta_c(\xi) \le \beta_{\cont}(\xi)$ for any $\xi$ in both the Ising and spherical models.
\end{proposition}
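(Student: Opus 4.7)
The plan is to argue by contrapositive: if $\beta > \beta_\cont$, I will exhibit a $\zeta \in \cP([0,1])$ with $\Par_\beta(\zeta) < \beta^2/2 = \Par_\beta(\delta_0)$. Since $F(\beta) = \inf_\zeta \Par_\beta(\zeta) \le \beta^2/2$ always (and the identity $\Par_\beta(\delta_0) = \beta^2/2$ is immediate from the Crisanti--Sommers formula \eqref{eq:CrisantiSommers} in the spherical case, and from solving the Parisi PDE \eqref{eq:Parisi-PDE} via Cole--Hopf in the Ising case), this will force $F(\beta) < \beta^2/2$ and hence $\beta > \beta_c$ by Proposition~\ref{prop:parisi-formula}. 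I take the 1RSB ansatz $\zeta_{\lambda, q} = (1-\lambda)\delta_0 + \lambda \delta_q$ and compute the right derivative $D(q, \beta) := \partial_\lambda \Par_\beta(\zeta_{\lambda,q})|_{\lambda = 0^+}$; strict convexity of $\Par_\beta$ then yields $\Par_\beta(\zeta_{\lambda,q}) < \beta^2/2$ for all small $\lambda > 0$ whenever $D(q, \beta) < 0$. The goal is to find a small $q > 0$ with $D(q, \beta) < 0$. This is essentially the de Almeida--Thouless instability of the replica-symmetric fixed point at zero external field.

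In the spherical case, plugging $\zeta_{\lambda,q}$ into \eqref{eq:CrisantiSommers} (with $\hat q = q$ and explicit integration of $\phi_{\zeta_{\lambda,q}}(s) = (1-\lambda)(q-s) + (1-q)$ on $[0,q]$) yields
\[
\Par^{\Sph}_\beta(\zeta_{\lambda, q}) = \frac{\beta^2}{2} + \frac{1}{2}\left(-\lambda\beta^2\xi(q) + \frac{1}{1-\lambda}\log\frac{1-\lambda q}{1-q} + \log(1-q)\right),
\]
so $D(q,\beta) = \frac{1}{2}(-\beta^2 \xi(q) - \log(1-q) - q)$. Taylor expanding via $\xi(q) = \xi''(0)q^2/2 + O(q^3)$ and $-\log(1-q) - q = q^2/2 + O(q^3)$ gives $D(q,\beta) \sim \frac{1 - \beta^2\xi''(0)}{4}q^2$ as $q \downarrow 0$, which is strictly negative for all small $q > 0$ exactly when $\beta > \beta_\cont$.

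In the Ising case, $\zeta_{\lambda,q}([0,s])$ is piecewise constant, so the Parisi PDE is linearizable piecewise: the substitution $u = e^{\Phi}$ reduces it to a backward heat equation on $[q,1]$ (where $\zeta_{\lambda,q}([0,s])=1$), and the modified substitution $v = e^{(1-\lambda)\Phi}$ does the same on $[0,q]$ (where $\zeta_{\lambda,q}([0,s])=1-\lambda$). Composing yields
\[
\Phi_{\zeta_{\lambda, q}}(0,0) = \frac{\beta^2(\xi'(1)-\xi'(q))}{2} + \frac{1}{1-\lambda}\log\E_Z\bigl[\cosh^{1-\lambda}(\sqrt{\beta^2\xi'(q)}\,Z)\bigr],
\]
and differentiating $\Par^{\Is}_\beta(\zeta_{\lambda,q})$ at $\lambda=0$ (using the Gaussian identity $\E[\cosh(aZ)f(aZ)] = e^{a^2/2}\E[f(aZ+a^2)]$ for even $f$) produces
\[
D(q, \beta) = \frac{\beta^2\xi'(q)}{2} - \E\log\cosh\bigl(\sqrt{\beta^2\xi'(q)}\,Z + \beta^2\xi'(q)\bigr) + \frac{\beta^2}{2}(q\xi'(q)-\xi(q)).
\]
Using $\log\cosh(x) = x^2/2 - x^4/12 + O(x^6)$ and $\xi'(q) = \xi''(0)q + O(q^2)$, the leading behavior is $D(q, \beta) \sim \frac{\beta^2\xi''(0)(1 - \beta^2\xi''(0))}{4}q^2$ as $q\downarrow 0$; assuming $\xi''(0) > 0$ (otherwise $\beta_\cont = \infty$ and the claim is vacuous), this is strictly negative for small $q > 0$ when $\beta > \beta_\cont$. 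The Ising computation is the main hurdle: the Parisi PDE requires two different Cole--Hopf substitutions across the jump at $s = q$, and the Taylor expansion of $D(q,\beta)$ involves a delicate cancellation at order $q^2$ between the $\log\cosh$ term and the entropy correction $\frac{\beta^2}{2}(q\xi'(q)-\xi(q))$, with the sign determined by the next order term $-a^4/4$ from $\log\cosh$. The spherical case, by contrast, is a one-line computation once \eqref{eq:CrisantiSommers} is expanded.
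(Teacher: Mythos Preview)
Your proof is correct. The spherical computation is sound (and your derivative $D(q,\beta)=\tfrac{1}{2}(-\beta^2\xi(q)-\log(1-q)-q)$ is exactly $-\tfrac{\xi(q)}{2}(\beta^2-\kappa(q))$, so the Taylor expansion at $q=0$ is equivalent to computing $\lim_{q\to 0}\kappa(q)=1/\xi''(0)$). The Ising computation checks out too: the two-stage Cole--Hopf, the tilted-Gaussian identity $\E[\cosh(aZ)f(aZ)]=e^{a^2/2}\E[f(aZ+a^2)]$, and the expansion $\tfrac{a^2}{2}-\E\log\cosh(aZ+a^2)=-\tfrac{a^4}{4}+O(a^6)$ combine as you claim to give the leading order $\tfrac{\beta^2\xi''(0)(1-\beta^2\xi''(0))}{4}q^2$. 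A tiny stylistic point: you invoke strict convexity to go from $D(q,\beta)<0$ to $\Par_\beta(\zeta_{\lambda,q})<\beta^2/2$, but this follows already from the sign of the one-sided derivative; convexity is not needed there.

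The route, however, differs from the paper's. In the spherical case the paper uses Talagrand's closed-form $\beta_c^2=\inf_q\kappa(q)$ and simply evaluates the limit $q\to 0$; this is the same content as your argument but without the 1RSB detour. In the Ising case the paper does not compute: it either invokes the stationarity characterization of the Parisi minimizer from \cite{chen2017variational}, or (more elegantly) gives a soft argument valid in both settings simultaneously, using free-energy subadditivity \cite{sellke2022free} to reduce any mixture $\xi$ to its pure $2$-spin part, where $\beta_c=\beta_{\cont}$ is classical. Your approach has the advantage of being entirely self-contained and of making the de Almeida--Thouless mechanism explicit (it shows \emph{why} $\delta_0$ becomes unstable at $\beta_{\cont}$, not just that it does); the paper's subadditivity argument has the advantage of avoiding the Parisi PDE altogether and treating both models at once.
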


\begin{proof}
    In the spherical case, we use the explicit characterization~\eqref{eq:beta-c-sphere} of $\beta_c$.
    We assume that $\xi''(0)>0$ as otherwise the statement is vacuous. 
    The limit as $q \to 0^+$ of the function $\kappa$ defined in Eq.~\eqref{eq:kappa} is $1/\xi''(0)$. This yields $\beta_c(\xi) \le \beta_{\cont}(\xi)$.
    In the Ising case, one can proceed similarly using the stationarity condition from e.g. \cite[Theorem 2]{chen2017variational}.

    Alternatively, in both cases one can deduce the bound as follows. 
    It is known in the pure $2$-spin case that $\beta_c=\beta_{\cont}$ is the RSB transition.
    By the subadditivity result \cite[Corollary 2.1]{sellke2022free}, decreasing the coefficients $\gamma_p$ of any RS spin glass model without external field yields another RS spin glass (since the RS or annealed free energy is always an upper bound for $F_{\beta}$, and is exactly additive).
    Applying this coefficient reduction to turn $\xi$ into a $2$-spin model gives the desired result.
    \end{proof}

\begin{lemma}\label{lem:beta_order}  
If a spherical spin glass with mixture $\xi$ admits a discontinuous RSB transition then there exists $\eps_0>0$ depending on $\xi$ such that any perturbation $\tilde{\xi}$ of $\xi$ of the form $\tilde{\xi}(x) = \xi(x) + \eps( x^p - x^2)$ with $0<\eps \le \eps_0$ and $p \ge 3$ also admits a discontinuous RSB transition and furthermore satisfies $\beta_c(\tilde{\xi}) < \beta_\cont(\tilde{\xi})$.
\end{lemma}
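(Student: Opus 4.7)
The plan is to use the explicit characterizations of $\beta_c$ and $\beta_\cont$ on the sphere recalled in this section: $\beta_c(\xi)^2=\inf_{q\in(0,1)}\kappa_\xi(q)$ with $\kappa_\xi(q)=(-\log(1-q)-q)/\xi(q)$, and $\beta_\cont(\xi)^2=1/\xi''(0)=\lim_{q\downarrow 0}\kappa_\xi(q)$. As observed after~\eqref{eq:f}, the discontinuous RSB assumption produces a nonzero critical point $q_c\in(0,1)$ of $\kappa_\xi$ with $\kappa_\xi(q_c)=\beta_c(\xi)^2\le 1/\xi''(0)$. My goal is to show that the perturbation $\tilde{\xi}(x)=\xi(x)+\eps(x^p-x^2)$ strictly separates these two values for small $\eps>0$ while preserving the existence of a nonzero minimizer of $\kappa_{\tilde{\xi}}$.

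First I would record the effect of the perturbation. Since $\tilde{\xi}''(0)=\xi''(0)-2\eps$, one has $\beta_\cont(\tilde{\xi})^2=1/(\xi''(0)-2\eps)$, while
\[
\kappa_{\tilde{\xi}}(q)=\frac{-\log(1-q)-q}{\xi(q)+\eps(q^p-q^2)}.
\]
Because $q^p<q^2$ on $(0,1)$ for $p\geq 3$, both $\kappa_{\tilde{\xi}}(q)$ at any fixed $q>0$ and the boundary value $\kappa_{\tilde{\xi}}(0^+)$ increase in $\eps$; the key question is which does so faster. A mild technicality is that one needs $\eps_0\leq\gamma_2^2$ so that $\tilde{\xi}$ remains a valid mixture; if $\gamma_2=0$ then $\beta_\cont(\xi)=+\infty$ and the conclusion $\beta_c<\beta_\cont$ already holds for $\xi$ itself.

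I would then split into two cases. In the non-degenerate case $\beta_c(\xi)<\beta_\cont(\xi)$, one has $\kappa_\xi(q_c)<\kappa_\xi(0^+)$, and by joint continuity in $\eps$ this strict inequality is preserved for all sufficiently small $\eps>0$. Since $\kappa_{\tilde{\xi}}$ extends continuously to $[0,1)$ and tends to $+\infty$ as $q\to 1^-$, its infimum on $(0,1)$ is attained at some $\tilde{q}_c>0$, yielding both a discontinuous RSB transition for $\tilde{\xi}$ and $\beta_c(\tilde{\xi})<\beta_\cont(\tilde{\xi})$.

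The main obstacle is the degenerate case $\beta_c(\xi)=\beta_\cont(\xi)$, where the two values coincide at $\eps=0$ and one must compare $\eps$-derivatives. Straightforward differentiation at $\eps=0$ gives
\[
\partial_\eps\kappa_{\tilde{\xi}}(0^+)\big|_{\eps=0}=\tfrac{2}{\xi''(0)^2},\qquad
\partial_\eps\kappa_{\tilde{\xi}}(q_c)\big|_{\eps=0}=\tfrac{(-\log(1-q_c)-q_c)(q_c^2-q_c^p)}{\xi(q_c)^2}.
\]
Substituting $\beta_c(\xi)^2\xi(q_c)=-\log(1-q_c)-q_c$ and $\xi''(0)=1/\beta_c(\xi)^2$ (which both hold in this case), the desired strict inequality $\partial_\eps\kappa_{\tilde{\xi}}(0^+)>\partial_\eps\kappa_{\tilde{\xi}}(q_c)$ reduces to
\[
2\bigl(-\log(1-q_c)-q_c\bigr)>q_c^2-q_c^p,
\]
which is immediate from the Taylor expansion $-\log(1-q)=q+q^2/2+q^3/3+\cdots$: the $q_c^2$ contributions cancel, and the remainder $\tfrac{2q_c^3}{3}+\tfrac{q_c^4}{2}+\cdots+q_c^p$ is strictly positive with a lower bound independent of $p\geq 3$. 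Hence $\kappa_{\tilde{\xi}}(q_c)<\kappa_{\tilde{\xi}}(0^+)$ for all small $\eps>0$, uniformly in $p$, and we conclude as in the non-degenerate case.
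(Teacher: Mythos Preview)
Your proof is correct and follows essentially the same strategy as the paper's: compare $\kappa_{\tilde{\xi}}(q_c)$ with $\kappa_{\tilde{\xi}}(0^+)=1/\tilde{\xi}''(0)$ by differentiating in $\eps$ at $\eps=0$ and showing the former grows strictly more slowly. The only difference is organizational: the paper avoids your case split by inserting the inequality $\kappa_\xi(q_c)\le 1/\xi''(0)$ (valid whether or not it is strict) directly into the derivative computation and closing with $2\xi(q)\ge q^2\xi''(0)$, obtaining $\varphi'(0)<0$ in a single stroke; your Taylor-series verification of $2(-\log(1-q_c)-q_c)>q_c^2-q_c^p$ in the degenerate case is an equivalent endgame.
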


\begin{proof}
Suppose $\xi$ admits a discontinuous RSB transition and let $\tilde{\xi}(x) = \xi(x) + \eps (x^p - x^2)$ for $\eps>0$ and $p \ge 3$. As argued above, a discontinuous RSB implies that the function $\kappa$ is minimized at a point $q_c\in (0,1)$.
 We consider the following function of $\eps$ given by
\begin{equation*}
\varphi(\eps) := \frac{-\log(1-q)-q}{\xi(q)+\eps (q^p - q^2)} - \frac{1}{\xi''(0)-2\eps}\,,~~\mbox{with }~ q = q_c\,. 
\end{equation*}
Evaluating its derivative at $\eps=0$ we obtain
\begin{align*}
\varphi'(0) &= \frac{(-\log(1-q)-q)(q^2 - q^p)}{\xi(q)^2} - \frac{2}{\xi''(0)^2}\\
&\le \frac{q^2 - q^p}{\xi(q)\xi''(0)} - \frac{2}{\xi''(0)^2}\\
&= \frac{1}{\xi(q)\xi''(0)^2}\big[(q^2 - q^p)\xi''(0) - 2\xi(q)\big]\,, 
\end{align*}
where the inequality in the second line uses the fact $\kappa(q) \le 1/\xi''(0)$. 
Since $\xi(0) = \xi'(0)=0$ and all the mixture coefficients are nonnegative $\gamma_p \ge 0$ we have $\xi(x) - x^2\xi''(0)/2 \ge 0$ for all $x \ge 0$. Since $q>0$, the above derivative is strictly negative. Therefore, since $\varphi(\eps) = \kappa(q) - 1/\xi''(0) \le 0$, for all $\eps$ small enough we have $\varphi(\eps)<0$ which implies $\beta_c(\tilde{\xi}) < \beta_{\cont}(\tilde{\xi})$.  
\end{proof}

\small

\bibliographystyle{alpha}
\newcommand{\etalchar}[1]{$^{#1}$}

\end{document}